\newcommand{\mat}[1]{\left(\begin{matrix}#1\end{matrix} \right)} 
\theoremstyle{plain}
\theoremstyle{remark}
\theoremstyle{definition}
\newtheorem{theorem}{Theorem}[section]
\newtheorem{lemma}[theorem]{Lemma}
\newtheorem{corollary}[theorem]{Corollary}
\newtheorem*{theorem*}{Theorem}
\newtheorem*{lemma*}{Lemma}
\newtheorem*{exercise*}{Exercise}
\theoremstyle{definition} 
\newtheorem{definition}[theorem]{Definition}
\newtheorem{remark}[theorem]{Remark}
\newtheorem{proposition}[theorem]{Proposition}
\newtheorem{example}[theorem]{Example}
\newtheorem*{definition*}{Definition}
\newtheorem*{notation*}{Notation}
\newtheorem*{example*}{Example}
\newtheorem*{question*}{Question} 
\title[Limit linear series  and tropical divisors on chains.]
{Limit linear series on chains of elliptic curves and tropical divisors on chains of loops}
\author[A. L\'opez]{Alberto L\'opez Mart\'in}
\address{Instituto de Matem\'atica Pura e Aplicada, Estrada Dona Castorina 110, 22460-320 Rio de Janeiro RJ, Brazil}
\email{alopez@impa.br}
\author[M. Teixidor]{Montserrat Teixidor {\textrm i} Bigas} 
\address{Department of Mathematics, Tufts University,503 Boston Avenue, Medford, MA 02155, USA}
\email{montserrat.teixidoribigas@tufts.edu}
\subjclass[2010]{Primary 14H60, 14T05 $\cdot$ Secondary 14H51}
\date{}
\begin{document}

\begin{abstract}
 We describe the space of Eisenbud-Harris limit linear series on a chain of elliptic curves and compare it with the theory of divisors on tropical chains. Either model allows to compute some invariants of Brill-Noether theory using combinatorial methods. We introduce effective limit linear series.
\end{abstract}

\maketitle
\begin{section}*{Introduction}
Limit linear series were introduced by Eisenbud and Harris \cite{EH86} in the early eighties
 and have since been used extensively as a very effective tool in dealing with a variety of problems related to moduli spaces of curves and Jacobians. 
 This theory is applicable to any curve of compact type, which, by definition,  is a curve whose dual graph has no loops or, equivalently, whose Jacobian is compact.
Shortly after the introduction of limit linear series, 
 Welters pointed out in  \cite{W}  that chains of elliptic curves are very well behaved in terms of their limit linear series.
 Welters remarked that, even in positive characteristic, chains of elliptic curves provide straightforward proofs of the basic results in Brill-Noether theory
 (see \cite{CT} section 2 for a proof of the Gieseker Petri Theorem using these curves). 
Since then, chains of elliptic curves have been used mostly to tackle problems in higher rank Brill-Noether theory 
--- an account can be found in \cite{Clay} --- and more recently again on the classical theory (see, for instance, \cite{P}).

In  this paper we do the following
\begin{enumerate}[(1)]
\item We show that, for a chain of elliptic curves, the Brill-Noether locus  is reducible with components explicitly described and in correspondence with  fillings of a certain Young Tableaux.
\item We show that, for  a chain of loops, the tropical Brill-Noether locus  is reducible with components corresponding to fillings of a certain Young Tableaux.
\item We define effective limit linear series and show they are equivalent, in the refined case, to the traditional limit linear series.
\item We establish a comparison between the limit linear series point of view and the tropical point of view.
\end{enumerate}

Our results on (1)(see section 1) reduce many algebro-geometric problems to combinatorial questions and can be used in explicit computations.
 In fact, in collaboration with Chan and Pflueger ( \cite{CLPT} ), we used this approach to compute the genus of a Brill-noether locus when the dimension of this locus is one
 (see also  \cite{CLT} ).
 Among the potential applications are the computation of the Euler -Poincare characterisitic of the Brill-Noether locus. 
 Some of the applications may require to extend the description to the space of limit series themselves and not just its image in the Jacobian.
 We will be considering this extension in forthcoming work.
 
 The results in (2)  were known only in the case when the tropical Brill-Noether locus is finite  (see \cite{CDPR}).
  In section 2, we show that the result extends to arbitrary dimension $\rho$
  with points in the Brill-Noether locus  in the case $\rho =0$ being replaced by sub-chains of loops from the original chain for arbitrary $\rho$.
  
  The concept  of effective linear series (section 3) is  a variation of the concept of limit linear series in \cite{EH86}:
   instead of successively concentrating all of the degree in one component, 
we leave just enough of it behind so that a line bundle would have a section on each component but still the dimension of the space of sections 
on the chosen component is the dimension of the limit linear series.
It is based on the insight that limit linear series should not be thought of as made up of unrelated pieces 
  on each component of the reducible curve but should rather be considered as line bundles and sections defined globally. 
  This point of view is useful in a number of questions. It is currently being used to deal with the maximal rank conjecture (see \cite{LOTZ}).
  We expect it will find applications to a number of other questions related to  generation like  the  study of  kernels of  evaluation maps and their impact on syzygies. .

  Finally, the comparison between the tropical and the limit linear series approach shows the equivalence of the two methods
   and should facilitate more fruitful conversations among researchers coming from the two different backgrounds.
 Our strategy is to  show that the orders of vanishing at the nodes of the effective linear series 
 agree with the tropical orders of vanishing (see Remark \ref{Rem}). 
 The proof of the Brill-Noether theorem is based, in both cases, in the use of these orders of vanishing, so the  proofs in the two set ups run in parallel.
We  include both proofs  here presented in a way that  illustrates the similarities  between the two.

\end{section}
\section{Limit linear series on chains of elliptic curves}\label{LLSdesc}

Limit linear series were introduced by Eisenbud and Harris  and they model the behavior of a linear series
 when an irreducible curve degenerates to a reducible nodal curve of compact type. 
Assume that  we have a one dimensional family of curves in which all fibers but one are irreducible while the special fiber is a nodal curve.
Under good conditions for the total space of the family, each of the components of the special fiber corresponds to a divisor on the total space. 
Given  a line bundle on the family, one can modify it by tensoring with line bundles  with support on the central fiber.
This will leave the restriction of the line bundle to the generic fiber unchanged while redistributing the degrees among the components of the central fiber.
Limit linear series isolate the data of the restrictions of these line bundles when the degree (and therefore the space of sections) is concentrated on a single component:

 \begin{definition}\label{lls} Let $C$ be a nodal curve of compact type (that is, whose dual graph has no loops). 
 A limit linear series of degree $d$ and (projective) dimension $r$ on $C$ consists of the data of a line bundle $L_i$ 
 of degree $d$ on each component $C_i$ of $C$ and a space of sections $V_i$ of $H^0(C_i, L_i)$ of dimension $r+1$ 
 satisfying the following condition: 
  Assume that  $P_{j_1(\alpha)} \in C_{j_1(\alpha)}$ is identified to  $P_{j_2(\alpha)} \in C_{j_2(\alpha)}$ to form a node $P_{\alpha}$  of $C$.
  Consider  the $r+1$ distinct orders of vanishing $u_0(j_1)>\dots>u_r(j_1)$ at $P_{j_1}$ of the sections in  $V_{j_1}$
  and the $r+1$ distinct orders of vanishing $u_0(j_2)>\dots>u_r(j_2)$ at $P_{j_2}$ of the sections in  $V_{j_2}$. Then, $u_t(j_1)+u_{r-t}(j_2) \ge d$, $t=0, \dots, r$.
  The series is called refined if $u_t(j_1)+u_{r-t}(j_2) = d$, $t=0, \dots, r$ for all nodes.
  \end{definition}
As we mentioned above, the definition of limit linear series  is inspired by what would appear as the limit of a linear series when the degree of the limit line bundle is 
concentrated successively in the various components. 
The relationship among the vanishing on the two components gluing at a node reflect the way these  limit bundles are related to each other:
With the notation above, $C-\{ P_{\alpha}\}$ is the union of the two connected components
$X_{j_1},  X_{j_2}$ that contain $C_{j_1}-\{ P_{j_1}\} , C_{j_2}-\{ P_{j_2}\}$, respectively. 
Assume that ${\mathcal L}_i$ is the line bundle on the family whose restriction to $C_{j_i}$ has degree $d$ and whose restriction to any other component has degree zero. 
Then, one can check that  ${\mathcal L}_2={\mathcal L}_1(-dX_2)$. 
If a section $\sigma $ of ${\mathcal L}_1$ vanishes with order $k$ on $X_{j_2}$ and $t $ is an equation of $C$ on the family, 
then $t^{d-k}\sigma$ is a section of ${\mathcal L}_{j_2}$ and vanishes on $X_{j_1}$ to order $d-k$.  Therefore, it vanishes at $P_{j_1}$ to order at least $d-k$.
This is what the relationship among the orders of vanishing reflects.

 \begin{definition}\label{cce} Let $C_1,\dots,C_g$ be elliptic curves, $P_i, Q_i \in C_i $ such that $P_i-Q_i$ is not a torsion point of $C_i$.
  Glue $Q_i$ to $P_{i+1},i = 1,\dots,g-1$ to form  a node. The resulting curve will be called  a \emph{general chain of elliptic curves} (of genus $g$). \end{definition}
     
A  general chain of elliptic curves is Brill-Noether general. 
We describe here the image in the jacobian of  the space of limit linear series of a fixed degree and dimension on such a curve. 
We first need to understand what goes on on a single elliptic component.
 As the canonical (dualizing) sheaf on an elliptic curve is trivial, the Riemann-Roch Theorem on elliptic curves is particularly simple.
 
  \begin{lemma}[Riemann-Roch Theorem]\label{RR} Given a line bundle $L$ of degree $d$ on an elliptic curve, then
  \begin{itemize}
  \item If $d<0$, then $h^0(L)=0$.
  \item If $d>0$, then $h^0(L)=d$.
   \item If $d=0$, then $h^0(L)=1$  if $L=\mathcal{O}$ and $h^0(L)=0$ otherwise.
   \end{itemize}
   \end{lemma}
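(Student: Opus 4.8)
The plan is to deduce all three cases from the classical Riemann--Roch theorem for a smooth projective curve of genus $g$, namely $h^0(L) - h^1(L) = \deg L - g + 1$, specialized to the genus $g = 1$ of an elliptic curve. Since the canonical (dualizing) sheaf of an elliptic curve is trivial, as already noted in the text, Serre duality gives $h^1(L) = h^0(K \otimes L^{-1}) = h^0(L^{-1})$. Substituting, the master identity I would work from is $h^0(L) - h^0(L^{-1}) = d$, valid for every line bundle $L$ of degree $d$.

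The only other ingredient is the elementary observation that a nonzero global section $s$ of a line bundle $M$ cuts out an effective divisor $\operatorname{div}(s)$ of degree $\deg M$; consequently $h^0(M) = 0$ whenever $\deg M < 0$, since no effective divisor can have negative degree. This fact, together with the master identity, is all that is needed.

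With these two facts in hand the three cases fall out quickly. If $d < 0$, the observation gives $h^0(L) = 0$ directly. If $d > 0$, then $L^{-1}$ has negative degree, so $h^0(L^{-1}) = 0$, and the master identity yields $h^0(L) = d$. If $d = 0$, the identity reduces to $h^0(L) = h^0(L^{-1})$; here one uses that a nonzero section of a degree-zero bundle produces an effective divisor of degree $0$, which must be the zero divisor, forcing $L \cong \mathcal{O}$. Thus $h^0(L) = 1$ when $L = \mathcal{O}$, the section being a nonzero constant, and $h^0(L) = 0$ otherwise.

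Since the result is a direct specialization of Riemann--Roch, I do not expect a genuine obstacle. The only point requiring a moment's care is the $d = 0$ case, where one must actually rule out sections for nontrivial $L$ rather than merely balance dimensions: this is precisely where the effective-divisor argument, and not Riemann--Roch alone, does the real work.
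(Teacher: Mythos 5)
Your proof is correct. Note that the paper itself supplies no proof of this lemma: it is quoted as a classical fact, prefaced only by the remark that the canonical (dualizing) sheaf of an elliptic curve is trivial. Your argument --- Riemann--Roch for $g=1$ combined with Serre duality $h^1(L)=h^0(L^{-1})$ (which is exactly where that remark enters), plus the observation that bundles of negative degree have no sections --- is precisely the standard derivation the paper is implicitly invoking, so there is nothing to reconcile. You are also right that the $d=0$ case is the one spot where dimension counting alone is insufficient: ruling out sections of a nontrivial degree-zero bundle genuinely requires the effective-divisor argument, since a nonzero section would cut out an effective divisor of degree $0$, forcing $L\cong\mathcal{O}$.
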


The next two lemmas have been used repeatedly in Brill-Noether questions for vector bundles (see, for instance, \cite{Tei05} Lemma 2.2 and \cite{Tei08b} Lemma 2.2) 
and exploit Lemma \ref{RR}, especially the third point.
  We include them here for the convenience of the reader.

\begin{lemma}\label{cotsupanul} Consider an elliptic curve $C$ and two points $P, Q\in C$ such that $P-Q$ is not a torsion point in the group structure of $C$.
Let $L$ be a line bundle of degree $d$ on $C$ and $V$ a space of sections of $L$ of dimension $r+1\le d$. 
Let the  orders of vanishing of the sections of $V$ at $P$  and $Q$ be, respectively,
\[u_0(P)>\dots>u_r(P)\mbox{ and }u_0(Q)>\dots >u_r(Q).\]
 Then $u_t(P)+u_{r-t}(Q)\le  d$ and  $u_t(P)+u_{r-t}(Q)=  d$ for, at most, one value $t$.
\end{lemma}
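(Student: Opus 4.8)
The plan is to exhibit, for each index $t$, a single nonzero section of $V$ that simultaneously realizes high vanishing at $P$ and at $Q$, read off the inequality from the degree of its zero divisor, and then analyze the equality case through the group law on $C$, where the non-torsion hypothesis on $P-Q$ will do the decisive work.

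First I would record the standard jump description of a vanishing sequence: since $\dim V = r+1$, the subspace $V_P^{(t)} := \{ s \in V : \operatorname{ord}_P(s) \ge u_t(P)\}$ has dimension exactly $t+1$, because the dimension of the sections vanishing to order $\ge k$ at $P$ drops by one precisely at each of the values $u_0(P) > \cdots > u_r(P)$, and the orders $\ge u_t(P)$ are exactly $u_0(P),\dots,u_t(P)$. Symmetrically, $V_Q^{(r-t)} := \{ s \in V : \operatorname{ord}_Q(s) \ge u_{r-t}(Q)\}$ has dimension $(r-t)+1$. A dimension count inside $V$ then gives $\dim\bigl(V_P^{(t)} \cap V_Q^{(r-t)}\bigr) \ge (t+1)+(r-t+1)-(r+1) = 1$, so there is a nonzero $s \in V$ with $\operatorname{ord}_P(s) \ge u_t(P)$ and $\operatorname{ord}_Q(s) \ge u_{r-t}(Q)$. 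The inequality is then immediate: the zero divisor of $s$ has degree $\deg L = d$, and since $P \neq Q$ (as $P-Q$ is nonzero, being non-torsion) we get $u_t(P) + u_{r-t}(Q) \le \operatorname{ord}_P(s) + \operatorname{ord}_Q(s) \le d$.

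For the uniqueness of equality, I would observe that equality $u_t(P)+u_{r-t}(Q)=d$ squeezes the two estimates above into equalities, forcing $\operatorname{ord}_P(s)=u_t(P)$, $\operatorname{ord}_Q(s)=u_{r-t}(Q)$, and the entire zero divisor of $s$ to be supported at $P$ and $Q$; that is, $\operatorname{div}(s)=u_t(P)\,P + u_{r-t}(Q)\,Q$, whence $L \cong \mathcal{O}_C\bigl(u_t(P)\,P + u_{r-t}(Q)\,Q\bigr)$. If equality held also for some $t' > t$, the same reasoning yields $L \cong \mathcal{O}_C\bigl(u_{t'}(P)\,P + u_{r-t'}(Q)\,Q\bigr)$, so the two divisors are linearly equivalent. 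Both have degree $d$, so the coefficient drop at $P$, namely $a := u_t(P)-u_{t'}(P) > 0$, equals the coefficient drop at $Q$, namely $u_{r-t'}(Q)-u_{r-t}(Q)$; subtracting the linear equivalences therefore gives $a\,(P-Q) \sim 0$. By the $d=0$ case of Lemma~\ref{RR}, a degree-zero divisor is linearly trivial only when the line bundle is $\mathcal{O}_C$, so $a(P-Q)$ is trivial in the group of $C$, i.e.\ $P-Q$ is a torsion point of order dividing $a$ — contradicting the hypothesis.

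Everything up to the equality analysis is formal linear algebra plus the degree count; the step where the hypotheses genuinely enter, and the one I expect to require the most care, is converting an equality into the assertion that $L$ is the \emph{specific} line bundle $\mathcal{O}_C(u_t(P)P+u_{r-t}(Q)Q)$ and then extracting a clean torsion relation. In particular I would check carefully that the two degree-$d$ divisors produce equal multiplicity drops at $P$ and at $Q$, since that is exactly what collapses the relation to $a(P-Q)\sim 0$ rather than an independent relation in the two points; it is the non-torsion assumption on $P-Q$ that then forbids two simultaneous equalities.
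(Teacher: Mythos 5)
Your proposal is correct and follows essentially the same route as the paper's own proof: the dimension count inside $V$ producing a section vanishing to order at least $u_t(P)$ at $P$ and $u_{r-t}(Q)$ at $Q$, the degree bound giving the inequality, and in the equality case the identification $L\cong\mathcal{O}_C(u_t(P)P+u_{r-t}(Q)Q)$, so that two simultaneous equalities yield $(u_t(P)-u_{t'}(P))(P-Q)\equiv 0$, contradicting the non-torsion hypothesis. Your write-up is in fact slightly more careful than the paper's at the points where the section's vanishing orders are a priori only bounded below and where the equal multiplicity drops at $P$ and $Q$ are extracted, but these are refinements of the same argument, not a different one.
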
 

\begin{proof} Note that, by definition, the dimension of the space of sections of $V$ that vanish to order $u_t$ at $P$, 
$\dim  V(-u_t(P)P)=t+1$ and $\dim  V(-u_{r-t}(Q)Q)=r-t+1$. Therefore, 
$$\dim  [V(-u_t(P)P)\cap  V(-u_{r-t}(Q)Q)]\ge t+1+r+1-t-\dim V=1.$$
There is then a section vanishing to order $u_t(P)$ at $P$ and $u_{r-t}(Q)$ at $Q$. As the degree of the line bundle is $d$, this requires that  $u_t(P)+u_{r-t}(Q)\le d$.
Moreover, if $u_t(P)+u_{r-t}(Q)= d$, then $L=\mathcal {O}(u_t(P)P+u_{r-t}(Q)Q)$. If there were another value $t'$ such that  $u_{t'}(P)+u_{r-t'}(Q)= d$,
 then  also $L=\mathcal {O}(u_{t'}(P)P+u_{r-t'}(Q)Q)$. This implies that 
 $u_t(P)-u_{t'}(P)=u_{r-t'}(Q)- u_{r-t}(Q)$. Hence, $(u_t(P)-u_{t'}(P))(P-Q) \equiv 0$ 
contradicting the assumptions about the generality of $P, Q$.
\end{proof}

\begin{lemma} \label{exfibanul} Given an elliptic curve $C$, two points $P, Q\in C$ such that $P-Q$ is not a torsion point, 
 and integers $d\ge u_0>\dots>u_r\ge 0$.
 \begin{enumerate}
\item There exists then a one-dimensional family of line bundles $L$  of degree $d$ on $C$ and, for each of them, a unique space of sections  $V$ of $L$ of dimension $r+1$
with orders of vanishing at $P$  and $Q$ being, respectively,
 \[ d-u_r, \dots, d-u_0\mbox{ and } u_0-1,\dots, u_r-1\]
 if and only if  $u_r>0$.
\item There exists a unique  line bundle $L$  of degree $d$ on $C$ and  space of sections  $V$ of $L$ of dimension $r+1$
with orders of vanishing at $P$  and $Q$ being, respectively,
 \[ d-u_r, \dots, d-u_0\mbox{ and } u_0-1,\dots, u_{t_0-1}-1, u_{t_0}, u_{t_0+1}-1,\dots  u_r-1\]
  if and only if  $u_{t_0}+1<u_{t_0-1}$ when $t_0\not= 0$ and  $u_r>0$ when $t_0\not= r$.
\end{enumerate}
\end{lemma}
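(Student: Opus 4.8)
The plan is to reduce everything to one computation on degree-one line bundles via Lemma~\ref{RR}. The key building block is this: for $L$ of degree $d$ and non-negative integers $a,b$ with $a+b=d-1$, the bundle $L(-aP-bQ)$ has degree $1$, so by Lemma~\ref{RR} it has a one-dimensional space of sections; hence there is a section $\sigma_{a,b}$ of $L$, unique up to scalar, with divisor $aP+bQ+R$, where $R$ is the unique point satisfying $\mathcal{O}(R)\cong L(-aP-bQ)$. This $\sigma_{a,b}$ vanishes to order exactly $a$ at $P$ iff $R\ne P$, and to order exactly $b$ at $Q$ iff $R\ne Q$. When $a+b=d$ instead, $L(-aP-bQ)$ has degree $0$, so such a section exists iff $L=\mathcal{O}(aP+bQ)$, in which case it is unique with divisor exactly $aP+bQ$.

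For the first part I would set $a_t=d-u_{r-t}$ and $b_t=u_{r-t}-1$ for $t=0,\dots,r$, so that $a_t+b_t=d-1$ and the prescribed $P$- and $Q$-sequences are exactly $\{a_t\}$ and $\{b_t\}$; here $u_r>0$ is precisely what makes every $b_t\ge0$, giving the (immediate) necessity. For existence, fix $L$ and form $V=\langle\sigma_{a_0,b_0},\dots,\sigma_{a_r,b_r}\rangle$, writing $R_t$ for the extra zero of $\sigma_{a_t,b_t}$. The conditions $R_t=P$ and $R_t=Q$ each pin $L$ to the single bundle $\mathcal{O}((a_t+1)P+b_tQ)$, respectively $\mathcal{O}(a_tP+(b_t+1)Q)$, so for all $L$ outside a finite subset of $\mathrm{Pic}^d(C)\cong C$ each $\sigma_{a_t,b_t}$ vanishes to order exactly $a_t$ at $P$ and $b_t$ at $Q$; since the $a_t$ are distinct these sections are independent, whence $\dim V=r+1$ with exactly the desired vanishing sequences. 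This exhibits the one-dimensional family. Uniqueness of $V$ for a given $L$ follows from the dimension count of Lemma~\ref{cotsupanul}: $\dim[V(-a_tP)\cap V(-b_tQ)]\ge1$ together with $a_t+b_t<d$ forces that intersection to be $\langle\sigma_{a_t,b_t}\rangle$, so any admissible $V$ contains every $\sigma_{a_t,b_t}$ and hence equals their span.

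For the second part, the modified $Q$-order $u_{t_0}$ at position $t_0$ pairs with the $P$-order $d-u_{t_0}$, and their sum is $d$, so by the equality case of Lemma~\ref{cotsupanul} the bundle is forced to be $L=\mathcal{O}((d-u_{t_0})P+u_{t_0}Q)$, which gives uniqueness of $L$. Necessity of the two conditions is again formal: a space of dimension $r+1$ has $r+1$ \emph{distinct}, non-negative $Q$-orders, and the listed sequence is strictly decreasing and non-negative exactly when $u_{t_0}+1<u_{t_0-1}$ (for $t_0\ne0$) and $u_r>0$ (for $t_0\ne r$). For existence I would build $V$ from the special section $\sigma_*$ with divisor $(d-u_{t_0})P+u_{t_0}Q$ together with the building blocks $\sigma_{a_t,b_t}$ at the remaining positions $t\ne r-t_0$.

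The crux of the whole lemma is checking that these building blocks behave for this \emph{one forced} bundle $L$, rather than for a generic one. Here the computation $\mathcal{O}(R_t)\cong\mathcal{O}((u_{r-t}-u_{t_0})P+(u_{t_0}-u_{r-t}+1)Q)$ shows, using that $P-Q$ is non-torsion, that $R_t\ne Q$ always and that $R_t=P$ happens precisely when $u_{r-t}=u_{t_0}+1$, i.e. only at $r-t=t_0-1$ and only if $u_{t_0-1}=u_{t_0}+1$. Thus exactly the hypothesis $u_{t_0}+1<u_{t_0-1}$ prevents the $P$-order of a building block from colliding with that of $\sigma_*$; under the stated conditions all $r+1$ orders at each point are distinct and equal to the prescribed values, giving the desired $V$, and the same intersection argument as before yields its uniqueness. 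I expect this non-torsion computation to be the main obstacle, since it is where the generality hypothesis on $P-Q$ enters and where the precise numerical conditions of the statement are produced.
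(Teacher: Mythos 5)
Your proposal is correct and follows essentially the same route as the paper's proof: both reduce everything to Riemann--Roch on the elliptic curve (the degree-one twist $L(-aP-bQ)$ has a unique section), both identify the finitely many exceptional bundles in part (i) using the non-torsion hypothesis on $P-Q$, and both force $L=\mathcal{O}((d-u_{t_0})P+u_{t_0}Q)$ in part (ii) via the equality case of Lemma~\ref{cotsupanul}. Your explicit computation of the residual zero $R_t$ and of when it collides with $P$ or $Q$ is simply a more detailed rendering of the paper's terser observation that a second section with orders at $P,Q$ summing to $d$ would force $P-Q$ to be torsion.
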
 

\begin{proof}\ \\

\begin{enumerate}
\item The condition is necessary by definition, as an order of vanishing must be at least 0.

Conversely, choose an arbitrary line bundle $L$ of degree $d$ on $C$. Then, $h^0(L(-(d-u_t)P-(u_t-1)Q))=1$. 
Therefore, there is a unique section $s_t$ of $L$ that vanishes at $P$ to order at least $d-u_t$ and at $Q$ to order at least $u_t-1$.
Moreover, unless $L=\mathcal {O}( (d-u_t)P+u_tQ) $ or $L=\mathcal {O}( (d-u_t+1)P+(u_t-1)Q) $,
 this section vanishes to order precisely  $d-u_t$ at $P$ and $u_t-1$ at $Q$.
 For a given value of $t$, the two exceptions listed completely determine the line bundle.
 There is a finite number of possible values of $t$ and therefore a finite number of possibly exceptional  line bundles $L$. 
If the identity occurred for two different values $t, t'$ and the same line bundle, $P-Q$ would be a torsion point against our assumptions.

 Assume that we are not in one of the exceptional situations. 
  Then, the sections $s_i$ have different orders of vanishing at $P$ and are therefore independent. Define $V$ the space generated by these sections.
  Then the pair $(L, V)$ is completely determined by these conditions.
 
  \item  The condition imposed on the $u_t$ is equivalent to saying that the numbers $u_0-1,\dots, u_{t_0-1}-1, u_{t_0}, u_{t_0+1}-1,\dots,  u_r-1$ are all different and non-negative. 
  From the proof of Lemma \ref{cotsupanul}, the only line bundle for which these orders of vanishing are possible is  $L=\mathcal {O}( (d-u_{t_0})P+u_{t_0}Q) $.
  This line bundle has a space of sections with the given vanishing if and only if one can find independent sections $s_t, 0\le t\le r$ vanishing at $P, Q$ with precisely the  given orders.
  In particular, this requires that 
\begin{align*}
&h^0( L(-(d-u_t)P-(u_t-1)Q))\ge 1, t\not= t_0, \mbox{ and }\\ &h^0( L(-(d-u_{t_0})P-u_{t_0}Q))\ge 1.
\end{align*}

  From the definition of $L$, these inequalities are in fact equalities. Therefore, we can choose unique sections of $L$ that vanish at $P, Q$ with the given orders.
  From the generality of the pair of points $P, Q$, only one section of the line bundle vanishes at $P, Q$ with orders adding up to $d$. 
  Hence, the order of vanishing at the two points cannot be larger than what is specified. As the orders of vanishing at one of the points are all different, 
  the sections are independent.
\end{enumerate}\end{proof}

Denote by $\rho (g, d,r)$ the Brill-Noether number that gives the expected dimension of the locus of line bundles of degree $d$ which have at least $r+1=k$
independent sections.
For simplicity of notation, we write $\bar k=g-1-d+k$ the dimension of the adjoint linear series to a linear series of degree $d$ and dimension $k$.
With this notation, $\rho =g-k\bar k$.
Denote by $c(k, \bar k)$ the number of rectangular standard Young tableaux with $k=r+1$ columns numbered $0,\dots, r$ and
 $\bar k =g-d+r$ rows numbered $1,\dots, g-d+r$.
 From the hook length formula, this number is given by (assuming $\bar k\le k$)
\[  c(k, \bar k)=\frac{(k\bar k)!}{(k+\bar k-1)((k+\bar k-2))^2\dots (k)^{\bar k} ((k-1))^{\bar k}\dots (\bar k))^{\bar k }(\bar k-1))^{\bar k -1}\dots (2)^21}.\]

\begin{theorem}\label{teorserlim} The image in the Jacobian of the scheme of limit linear series of degree $d$ and dimension $k$ on a general chain of elliptic curves 
is reducible with 
\[ {g\choose {\rho}} c(k, \bar k)\]
components corresponding to the $c(k, \bar k)$ fillings of the $k\times \bar k$ Young diagram with $g-\rho=k\bar k$ numbers from the set $1,2, \dots, g$. 
Each of these components is birationally equivalent to a product of $\rho$ of the elliptic curves among the 
irreducible components in $C$ (the ones whose indices do not appear in the corresponding tableau).
\end{theorem}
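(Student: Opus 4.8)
My plan is to follow, for each component $C_i$, the two vanishing sequences of $V_i$ at the gluing points $P_i$ and $Q_i$, and to read off the geometry of the moduli from how these sequences evolve along the chain. Write $a_0^i>\cdots>a_r^i$ for the orders at $P_i$ and $b_0^i>\cdots>b_r^i$ for the orders at $Q_i$, and set $\hat b_t^i=d-b_{r-t}^i$, a strictly decreasing sequence. Applying Lemma \ref{cotsupanul} on $C_i$ gives $a_t^i\le\hat b_t^i$ for all $t$, with equality for at most one $t$; the node condition of Definition \ref{lls} at $Q_i=P_{i+1}$ reads $b_t^i+a_{r-t}^{i+1}\ge d$, i.e.\ $\hat b_t^i\le a_t^{i+1}$. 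Concatenating these, and writing $A_i=(a_t^i)_t$, $\hat B_i=(\hat b_t^i)_t$, I obtain the coordinatewise sandwich
\[
A_1\le\hat B_1\le A_2\le\hat B_2\le\cdots\le\hat B_{g-1}\le A_g\le\hat B_g .
\]
Since any $r+1$ distinct non-negative orders satisfy $a_t\ge r-t$, the free endpoints $P_1,Q_g$ force $A_1\ge(r,r-1,\dots,0)$ and $\hat B_g\le(d,d-1,\dots,d-r)$, with equality exactly when the endpoint series is unramified.

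Next I would feed $A_i$ into Lemma \ref{exfibanul}. That lemma exhibits precisely two continuations of maximal moduli across $C_i$: the generic one $\hat B_i=A_i+(1,\dots,1)$, realized by a one-parameter family of pairs $(L_i,V_i)$ [case (1)], and a continuation $\hat B_i=A_i+(1,\dots,1)-e_s$ that holds one coordinate $s$ fixed, realized by a unique $(L_i,V_i)$ [case (2)], subject to the gap conditions recorded there; I will call these \emph{type-1} and \emph{type-2} components, and any further degeneration (larger jumps, $S_i>0$) will only waste budget below. Setting $T_i=\sum_t(\hat b_t^i-a_t^i)$ and $S_i=\sum_t(a_t^{i+1}-\hat b_t^i)\ge 0$, a telescoping sum gives $\sum_iT_i+\sum_iS_i=(r+1)d-\sum_t a_t^1-\sum_t b_t^g\le(r+1)(d-r)=rg+\rho$, using $\rho=g-(r+1)(g-d+r)$. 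As $T_i\ge r$ always, with $T_i=r+1$ for type-1 and $T_i=r$ for type-2, this forces the number $p$ of type-1 components to satisfy $p+\sum_iS_i\le\rho$. Hence every locus of dimension $\rho$ has unramified endpoints, all internal nodes refined ($S_i=0$, so $\hat B_i=A_{i+1}$), and exactly $p=\rho$ type-1 components.

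For such a maximal configuration the profile is continuous across nodes and runs from $(r,\dots,0)$ to $(d,\dots,d-r)$, each type-1 step adding $(1,\dots,1)$ and each type-2 step holding one coordinate. Comparing the initial and final profiles coordinatewise shows each coordinate $t\in\{0,\dots,r\}$ is held by exactly $\bar k=g-d+r$ of the type-2 components, so there are $k\bar k=g-\rho$ of them. I would then encode the process as a filling of the $k\times\bar k$ rectangle, columns indexed by the held coordinate $t$ and rows by the order of holds within a column: the requirement that the running profile stay strictly decreasing, via the gap computation that $a_s-a_{s+1}$ drops by one when $s$ is held and rises by one when $s+1$ is held, translates exactly into the condition that the filled boxes always form a Young diagram with entries increasing along rows and down columns, i.e.\ a standard Young tableau, giving $c(k,\bar k)$ fillings. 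Independently, the $\rho$ type-1 steps preserve all gaps and so never obstruct validity, so they may occupy any $\rho$-subset of $\{1,\dots,g\}$, giving $\binom{g}{\rho}$ choices and $\binom{g}{\rho}\,c(k,\bar k)$ combinatorial types in all. By Lemma \ref{exfibanul}(1) each type-1 component contributes a one-parameter family of line bundles dense in $\operatorname{Pic}^d(C_i)\cong C_i$, while each type-2 component contributes a unique line bundle; thus the image in the Jacobian of the locus of a fixed type is irreducible, of dimension $\rho$, and birational to the product of the $\rho$ elliptic curves with type-1 indices, that is, those absent from the tableau.

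Finally I would argue these loci are exactly the irreducible components: the budget inequality shows every other type (ramified endpoints, some $S_i>0$, or $p<\rho$) lies in a locus of dimension $<\rho$, hence in the closure of the maximal ones, while the standard lower bound $\dim\ge\rho$ for the scheme of limit linear series pins the maximal loci down as the components and separates distinct combinatorial types. I expect the main obstacle to be the two intertwined points of the third paragraph: verifying that the profile-validity constraints coincide \emph{exactly} with the standard-Young-tableau conditions, so the count is precisely $c(k,\bar k)$ rather than merely bounded by it, and checking genuine realizability---that the side conditions of Lemma \ref{exfibanul} (the inequalities $u_r>0$ and $u_{t_0}+1<u_{t_0-1}$) hold all along the chain and that the local pairs $(L_i,V_i)$ can be chosen compatibly at every node to yield an honest global limit linear series.
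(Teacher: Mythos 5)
Your proposal is correct and takes essentially the same approach as the paper: the same two lemmas (Lemmas \ref{cotsupanul} and \ref{exfibanul}) yield a vanishing-order budget forcing at most $\rho$ components with a moving line bundle, the standard lower bound $\dim\ge\rho$ pins the maximal loci down as the components, and the ``hold one coordinate'' steps are recorded as a standard Young tableau whose columns are checked to have height $\bar k$. Your telescoping $T_i,S_i$ bookkeeping is just a coordinatewise repackaging of the paper's summed inequality, and the two obstacles you flag at the end are exactly what the paper settles via the explicit formula $u_s(i)=d-s-i+\beta_{i,s}$, the gap condition $\beta_{i-1,s}<\beta_{i-1,s-1}$, and the existence statements of Lemma \ref{exfibanul}.
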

\begin{proof} (Compare, for instance, with the proof of \cite[Thm 1.1]{Tei04} or the proof of \cite[Thm 1.1]{Tei08b}).
The orders of vanishing at $P_1$ (resp. $Q_g$) of an $r+1$-dimensional space of sections of a line bundle of degree $d$ on the curve $C_1$ 
(resp. $C_g$) are at least $(r, r-1, \dots, 0)$. From the definition of limit linear series, the sum of the orders of 
vanishing at $Q_i, P_{i+1}, i=1,\dots , g-1$ of the sections of a linear series is at least $(r+1)d$. Therefore, the sum of all the vanishing at the points 
$P_i, Q_i, i=1,\dots,g$ is at least $(g-1)(r+1)d+r(r+1)$. 

On the other hand, from Lemmas \ref{cotsupanul} and \ref{exfibanul}, the sum of the orders of vanishing at $P_i, Q_i$ is at most $(r+1)(d-1)$
for a general choice of line bundle and $(r+1)(d-1)+1$
if the line bundle in this component is of the form ${\mathcal O}((d-u_{t(i)})P_i+u_{t(i)}Q_i)$. 
In the latter case, the line bundle is completely determined by the vanishing at  $P_i$ and the choice of one index $t(i)$ which must satisfy  the condition   $u_{t(i)}+1<u_{t(i)-1}$.
Write $\alpha$ for the number of components where the line bundle is generic.
The sum of the vanishing orders at all $P_i, Q_i$ is at most $g(r+1)(d-1)+g-\alpha$.
Putting together the two inequalities, we obtain 
$$ (g-1)(r+1)d+r(r+1)\le g(r+1)(d-1)+g-\alpha,$$
which can be written as 
\[  \alpha \le g-(r+1)(g-d+r)=g-(r+1)\bar k =\rho.\]
The line bundles on the elliptic curves $C_i$ can take a finite number of values when  they have been chosen to be special  and can move on the (one-dimensional) Jacobian of 
$C_i$ otherwise. So $\alpha $ gives a bound for the dimension of the space of limit series. This shows that the dimension of any component 
of the scheme of limit linear series  is at most $\rho$.
 On the other hand, it is known from the construction of a space of limit linear series that every component has dimension at least $\rho$. 
 Therefore, every component has dimension precisely $\rho$.
 Note also that  a component of dimension $\rho $ corresponds to the choice of $\rho$ components  $C_i$ 
of $C$ where the line bundles $L_i$ are free to vary and the choice on each of the remaining components $C_j$ of an order of vanishing at $P_j$ satisfying the condition 
in Lemma \ref{exfibanul} (ii).

Let us see how this choice can be carried out. A component of dimension $\rho$ 
of the space of limit linear series, corresponds to  the data above so that all the inequalities are equalities.
We will denote by $u_0(i)>\dots>u_r(i)$  the orders of  vanishing of the sections at $Q_i$. 
As the inequalities are equalities, the orders of vanishing at $P_i$ must be 
  $(d-u_r(i-1),\dots, d-u_0(i-1))$.
  
When a line bundle is chosen generically,  Lemma \ref{exfibanul} (i) states that each vanishing order at $Q_i$ is one less than at $Q_{i-1}$, while for a special line bundle one of the vanishing orders stays the same while the rest decrease in one unit, therefore at a generic point of a component of the set of limit linear series:
\begin{enumerate}[(a)]
\item  The vanishing at $P_1, Q_g$ is $(r, r-1,\dots , 0)$. 
\item On $\rho$ of the components $C_i$, the line bundle is  generic  and then 
\[  (u_0(i),\dots, u_r(i))=(u_0(i-1)-1,\dots, u_r(i-1)-1).\] 
\item On $g-\rho$ of the components $C_i$, there is a $t(i)$ with $u_{t(i)}(i-1)+1<u_{t(i)-1}(i-1)$, the line bundle is  of the form  
$\mathcal {O}((d- u_{t(i)}(i-1))P_i+u_{t(i)}(i-1)Q_i) $    and then 
\[  (u_0(i),\dots, u_r(i))=(u_0(i-1)-1,\dots,u_{t(i) -1}(i-1)-1, u_{t(i) }(i-1), u_{t(i) +1}(i-1)-1,\dots , u_r(i-1)-1).\] 
\end{enumerate}
In keeping with (a) for $P_1$ and our other conventions, we write 
\[  (u_0(0),\dots, u_r(0))=(d, d-1,\dots , d-r).\] 
From this description, we can compute the orders of vanishing at any point $Q_i$ in terms of the values of the $t(j)$ for $j\le i$ as follows:
condition (a) says that $( d-u_r(0), \dots, d-u_0(0))=(r, r-1,\dots,0)$. Hence, $u_s(0)=d-s$. 
As the vanishing $u_s$ goes down by a unit on each component $u_s(i)=u_s(i-1)-1$
except if $t(i)=s$, writing $\delta_{a,b}=0, a\not=b,  \delta_{a,b}=1, a=b$, $\beta_{i,s}=\sum _{\{ j\le i\ :\ L_j \text{ special} \} }\delta_{t(j),s}$, 
\begin{equation}\label{anulserlim} u_s(i) =d-s-i+\beta_{i,s}.\end{equation}

Recall that  $s$ is a suitable candidate for $t(i)$ if  and only if $u_s(i-1)+1< u_{s-1}(i-1)$. From the formula we just obtained, this is equivalent to 
 \begin{equation}\label{condY}\beta_{i-1,s}<\beta_{i-1,s-1}. \end{equation}

We now place the $g-\rho =k\bar k$ indices corresponding to the curves with special line bundle in a $k\times \bar k $  Young tableau. 
 An index $i$ is placed in the first empty spot  in column $t(i), 0\le t(i)\le k-1=r$. So it ends up in row $\beta_{i,t(i)}$, 
 Our definition guarantees that the indices increase going down the columns,   Equation (\ref{condY}) guarantees that they increase moving along the rows to the right.
It remains to check that each column has heigth $\bar k$ and therefore the indices fill the tableau.
 This can be shown as follows: $u_j(0)=d-j, u_j(g)=r-j$,  $u_j(i)=u_j(i-1)-1$ if $i$ is not in column $j$ while $u_j(i)=u_j(i-1)$ if $i$ is in column $j$.
 Therefore, there are $g-[(d-i)-(r-i)]=\bar k$ indices in column $j$ ensuring that $1\le t(i)\le \bar k$.
 
 Note that in this case, the line bundle of the limit linear series on $C_i$ is
   \begin{equation}\label{fibrat} L_i= \mathcal {O}(( t(i)+i-\beta_{i,t(i)})P_{i}+(d- t(i)-i+\beta_{i,t(i)})Q_{i}). \end{equation} \end{proof}
   
In joint work with M. Chan and N. Pflueger \cite{CLPT}, we give a full description of the scheme $G^{k,\alpha,\beta}_d(X,p,q)$ 
of limit linear series of degree $d$ and dimension $k$ on a general chain of elliptic curves with prescribed ramification $\alpha$, respectively $\beta$ at a general point $p$,
 respectively $q$ when $\rho=1$. In the special case when  $\rho=1$, Theorem\ref{teorserlim} follows from the result in \cite{CLPT}.   

\begin{proposition} The components corresponding to two different Young tableaux intersect in the Jacobian 
if and only if  the indices that appear in both appear in boxes $(t_i, m_i), (t_j, m_j)$
with $t_i- m_i=t_j-m_j$. The dimension of the intersection of two such components equals the number of indices that do not appear in either tableau. 
\end{proposition}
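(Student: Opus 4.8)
The plan is to read off the geometry directly from the product description of each component established in the proof of Theorem~\ref{teorserlim}. Since $C$ is of compact type, its Jacobian is the product $\prod_{i=1}^{g}\mathrm{Pic}^{d}(C_i)$, and each factor $\mathrm{Pic}^{d}(C_i)\cong C_i$ is one-dimensional. On the component attached to a filling $T$, the proof shows that $L_i$ is rigidly determined by~(\ref{fibrat}) when $i$ is special (appears in $T$), whereas $L_i$ sweeps out all of $\mathrm{Pic}^{d}(C_i)$ when $i$ is generic (does not appear in $T$). Hence the image of this component is the coordinate subvariety
\begin{equation*}
Z_T=\prod_{i\ \text{special in}\ T}\{L_i^{T}\}\ \times\ \prod_{i\ \text{generic in}\ T}\mathrm{Pic}^{d}(C_i),
\end{equation*}
a product of reduced points and full elliptic factors. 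Everything reduces to intersecting two such coordinate subvarieties.

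First I would decide when $Z_T$ and $Z_{T'}$ meet, comparing them factor by factor. At an index generic in at least one of the two tableaux the corresponding factor of the other is all of $\mathrm{Pic}^{d}(C_i)$, so there is never an obstruction. The only constraint occurs at an index $i$ special in both tableaux: there $Z_T$ forces $L_i=L_i^{T}$ and $Z_{T'}$ forces $L_i=L_i^{T'}$, so the factors meet if and only if $L_i^{T}=L_i^{T'}$. Therefore $Z_T\cap Z_{T'}\ne\varnothing$ precisely when $L_i^{T}=L_i^{T'}$ for every common index $i$.

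Next I would unwind this equality with formula~(\ref{fibrat}). Writing $(t_i,m_i)$ for the box of the common index in the first tableau and $(t_j,m_j)$ for its box in the second, the $P_i$-coefficient of $L_i$ is $t+i-m$ with the matching subscripts, so
\begin{equation*}
L_i^{T}\otimes\bigl(L_i^{T'}\bigr)^{-1}=\mathcal{O}\bigl(\,((t_i-m_i)-(t_j-m_j))(P_i-Q_i)\,\bigr).
\end{equation*}
This degree-zero bundle is trivial exactly when $((t_i-m_i)-(t_j-m_j))(P_i-Q_i)\equiv 0$ in the group law of $C_i$; since $P_i-Q_i$ is non-torsion by Definition~\ref{cce}, this forces the integer coefficient to vanish, i.e.\ $t_i-m_i=t_j-m_j$. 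This is exactly the stated criterion, and it is the same non-torsion input used in Lemma~\ref{cotsupanul}.

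Finally, for the dimension, I would observe that whenever $Z_T\cap Z_{T'}$ is nonempty it is again a coordinate subvariety: its $i$-th factor is the full curve $\mathrm{Pic}^{d}(C_i)$ exactly when $i$ is generic in both $T$ and $T'$ (that is, $i$ appears in neither tableau), and is a single point otherwise. Hence $\dim(Z_T\cap Z_{T'})$ equals the number of indices appearing in neither tableau, as claimed. The only step requiring genuine care is the bookkeeping of the three factorwise cases; the entire geometric content is the elementary reduction to the non-torsion hypothesis on $P_i-Q_i$.
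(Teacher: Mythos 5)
Your proof is correct and follows essentially the same route as the paper's: identify each component with a coordinate subvariety of the product Jacobian (the fixed bundles of equation (\ref{fibrat}) at indices appearing in the tableau, full one-dimensional factors at the others), intersect factor by factor, and count the free factors to get the dimension. The only difference is that you explicitly unwind, via the non-torsion hypothesis on $P_i-Q_i$, the equivalence between $L_i^{T}=L_i^{T'}$ and the condition $t_i-m_i=t_j-m_j$, a detail the paper asserts without computation.
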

\begin{proof} The correspondence between components of the locus  of limit linear series and Young tableaux is defined so  that 
on the elliptic curves $C_i$   whose indices do not appear in the Young tableau, the line bundle is free to vary.
For a component $l_0$ whose index  appears in the Young tableau in column $t_0$ and row $x_0$, the line bundle is completely determined as given 
in equation (\ref{fibrat}).  If the index appears in two tableaux that intersect, the line bundle determined by the tableaux must be the same. 
This is equivalent to the condition $t_i- m_i=t_j-m_j$. Conversely, if these conditions are satisfied whenever an index appears in both tableaux, 
then the line bundle determined by each of the tableaux on that component is the same. For components that appear in only one of the tableaux,
 the line bundle is determined by the position of the index on that tableau while line bundles on components whose indices  do not appear in either tableau
 are free to vary and therefore contribute 1 to the dimension of the intersection.
\end{proof}

\section{The tropical case}\label{sectrop}  
 
In this section we look at the tropical proof of Brill-Noether presented in \cite{CDPR}
and make use of  their techniques to give a description of the Brill-Noether locus for tropical chains of loops in terms of Young Tableaux (see Theorem \ref{espaiBNTrop} ).
This presentation extends the results of \cite{CDPR} Theorem 1.4 that deals with the case of Brill Noether number zero.

\begin{definition} A tropical curve is a metric graph. 
 The free abelian group on the points of a tropical curve $\Gamma$ is called the set of divisors $Div(  \Gamma )$.
  A function on the graph is a continuous piecewise linear function  whose slope on each piece of the subdivision is integral.
 One can associate to such a function $\psi$ the divisor that at each point of $\Gamma$ has weight
  the sum of the incoming slopes on the edges of $\Gamma$ to which the point belongs.  
 Two divisors are said to be equivalent
 (written as $\equiv$) if they differ in the divisor of a piecewise linear function.
 The group of equivalence classes of divisors is called the Picard group of $\Gamma$ and is graded by degree. 
 \end{definition}

 \begin{definition}\label{ctr} Let $\mathscr{L}_1,\dots, \mathscr{L}_g, \mathscr{M}_1,\dots, \mathscr{M}_g$ be segments of (real) length
  $l_1,\dots, l_g, m_1,\dots m_g$. Identify the two ends of  $\mathscr{L}_i$ with the two ends of $\mathscr{M}_i $ to give points $Q_{i-1}, Q'_i$.
  Then identify $Q'_i$ with $Q_{i}$ to form a connected chain with $g$ loops. The resulting tropical curve will be called a \emph{chain of loops} (of genus $g$). 
  The chain is said to be general if  the lengths  are generic 
  (which from the point of view of Brill-Noether theory means that their quotients do not equal the quotient of two positive integers less than $2g-2$).\end{definition}

 We now state some facts about divisors on tropical curves that are similar to those on sections of line bundles on elliptic curves.
 
   \begin{lemma}[Tropical Riemann-Roch on a loop]\label{TRR} Given a divisor $D$ of degree $d$ on a single loop, two points $P, Q$ on the loop 
   and (when $d\ge 0$) a non-negative  integer $a\le d$, then
  \begin{enumerate}
  \item[(i)] If $d\le 0$, then $D$ is not equivalent to an effective divisor unless $D=0$.
  \item[(ii)] If $d>0, \ a<d$, then $D$ is equivalent to a divisor of the form $aP+(d-a-1)Q+R$ where $R$ is some point on the loop.
   \item[(iii)] If $P, Q$ are general, there is a unique equivalence class of divisors such that $D$ is equivalent to $aP+(d-a)Q$ and
    then $D$ is not equivalent to $bP+(d-b)Q$ for any integer $b\not= a$.
   \end{enumerate}
   \end{lemma}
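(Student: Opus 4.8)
The plan is to reduce the entire statement to elementary arithmetic in the circle group $\mathbb{R}/L\mathbb{Z}$, where $L=\ell+m$ is the total length of the loop and $\ell,m$ are the lengths of the two arcs joining $P$ to $Q$. I identify the loop with $\mathbb{R}/L\mathbb{Z}$ by arc length, placing $P$ at $0$ and $Q$ at $\ell$, and I record each divisor $D=\sum_i n_i x_i$ by the single invariant $\phi(D)=\sum_i n_i x_i\in\mathbb{R}/L\mathbb{Z}$ (the positions weighted by multiplicity). The backbone is the tropical Abel--Jacobi description of a single loop: two divisors of the same degree are equivalent if and only if they have the same image under $\phi$; equivalently, $\phi$ induces an isomorphism $\mathrm{Pic}^0(\Gamma)\xrightarrow{\sim}\mathbb{R}/L\mathbb{Z}$ (this is the single-loop computation underlying \cite{CDPR}). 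Granting this, parts (i)--(iii) are short.

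To establish the isomorphism I would argue in two steps. First, that $\phi$ kills principal divisors: if $\psi$ is piecewise linear with integer slopes, I lift it to a genuine $L$-periodic function on $\mathbb{R}$ and observe that $\phi(\operatorname{div}\psi)=\sum_v x_v\,\mathrm{ord}_v(\psi)$ equals, after summation by parts, $L\cdot\psi'(0^+)-\int_0^L\psi'\,dt$; periodicity makes the integral vanish and integrality of the slope $\psi'(0^+)$ makes the boundary term a multiple of $L$, so $\phi(\operatorname{div}\psi)\equiv0$. Second, that $\phi$ is injective on degree zero: given $D=\sum_i n_i x_i$ with $\sum_i n_i=0$ and $\sum_i n_i x_i\equiv0\pmod L$, I build $\psi$ by prescribing its slope to jump by $n_i$ at each $x_i$; the running-sum step function $s(t)=\sum_{x_i\le t}n_i$ is $L$-periodic because $\deg D=0$, and the single constant of integration needed to close $\psi$ up on the circle is $-\tfrac1L\int_0^L s\,dt=\tfrac1L\sum_i n_i x_i$, which is an integer precisely because $\phi(D)\equiv0$. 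Hence $\operatorname{div}\psi=D$, so $\phi$ is injective; surjectivity is clear.

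With the isomorphism in hand the three parts follow. For (i), equivalence preserves degree and the only effective divisor of degree $\le 0$ is the empty one, so an equivalence to an effective divisor forces $d\ge0$ and, when $d=0$, forces $D\equiv 0$. For (ii), since $a\ge0$ and $d-a-1\ge0$ the divisor $aP+(d-a-1)Q$ has degree $d-1$; solving $\phi(D)=a\cdot 0+(d-a-1)\ell+x_R$ in $\mathbb{R}/L\mathbb{Z}$ determines a unique position $x_R$, and the point $R$ at that position satisfies $\phi(D)=\phi(aP+(d-a-1)Q+R)$ with equal degrees, giving the desired equivalence. For (iii), $aP+(d-a)Q\equiv bP+(d-b)Q$ holds iff the $\phi$-images agree, i.e. iff $(a-b)(x_P-x_Q)=-(a-b)\ell\equiv0\pmod L$; since $|a-b|\le d$, the genericity of the lengths in Definition \ref{ctr} (which prevents $\ell/L$ from being a rational number of denominator at most $d$) forces $a=b$, so $D$ is equivalent to at most one divisor of the form $aP+(d-a)Q$.

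The one genuinely substantive step is the isomorphism $\mathrm{Pic}^0(\Gamma)\cong\mathbb{R}/L\mathbb{Z}$, and within it the injectivity: producing an honest single-valued, integer-slope function on the circle realizing a prescribed degree-zero, $\phi$-trivial divisor. Everything else is bookkeeping in $\mathbb{R}/L\mathbb{Z}$, and the parallel with the elliptic Lemmas \ref{cotsupanul} and \ref{exfibanul} is exact, with $\mathbb{R}/L\mathbb{Z}$ playing the role of the Jacobian of the elliptic curve and the genericity of $\ell/m$ replacing the non-torsion hypothesis on $P-Q$.
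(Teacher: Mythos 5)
Your proof is correct, but it is packaged differently from the paper's. The paper treats the lemma as an ``easy computation'' in the spirit of Example 2.1 of \cite{CDPR}: part (i) follows from the fact that divisors of functions have degree zero, part (ii) is proved by directly exhibiting a piecewise linear function realizing the equivalence, and part (iii) is reduced to the assertion that $(b-a)P$ is not equivalent to $(b-a)Q$ for general $P,Q$ --- an assertion stated without further justification. You instead prove the single-loop tropical Abel--Jacobi theorem up front: the weighted-position map $\phi$ descends to an isomorphism $\mathrm{Pic}^0(\Gamma)\cong \mathbb{R}/L\mathbb{Z}$, and then (i)--(iii) become arithmetic in the circle group. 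Both halves of your key step check out: the summation-by-parts computation does give $\phi(\operatorname{div}\psi)=L\,\psi'(0^+)$ once the integral of $\psi'$ vanishes by periodicity, and in the injectivity step the constant of integration $\tfrac1L\sum_i n_i x_i$ is an integer exactly when $\phi(D)\equiv 0$, so the constructed $\psi$ has integer slopes and divisor $D$. What your route buys: the uniqueness claims, which are the only genuinely delicate content (the paper's part (iii) and its implicit claim that $(b-a)P\not\equiv(b-a)Q$), become complete arguments rather than appeals to genericity, and the isomorphism you establish is precisely the tool the paper uses implicitly throughout Section \ref{sectrop} (for instance in Definition \ref{defesp}), so it is reusable. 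What the paper's route buys is brevity, since for (ii) drawing the function is a two-line verification. One minor point: your quantification of ``general'' --- $\ell/L$ not rational with denominator at most $d$ --- is the correct hypothesis for the lemma as stated, but it coincides with the genericity of Definition \ref{ctr} (quotients of lengths not equal to ratios of positive integers less than $2g-2$) only when $d\le 2g-2$; this mismatch is harmless for the Brill--Noether applications and is equally present in the paper, but it is worth keeping in mind if the lemma were invoked for larger $d$.
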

\begin{proof} The proof of this result can be obtained from an easy computation (compare also with example 2.1 in \cite{CDPR}). 
As the divisor of a function has degree zero, the first point is clear. 

Part (ii) can be proved  by directly exhibiting   a piecewise linear function giving the equivalence.

For (iii), if $aP+(d-a)Q$ is equivalent to $bP+(d-b)Q$, then (if, say, $b\ge a$), then $(b-a)P$ is equivalent to $(b-a)Q$, which is not true for  any pair $a, b$ if $P, Q$ are general.
\end{proof}
  
Recall that a divisor $D$ on a tropical curve $\Gamma$ is said to be of rank $r$ if for every effective divisor $D'$ of degree $r$ on $\Gamma$, $D-D'$ 
is equivalent to an effective divisor.  

Consider a  chain $\Gamma$ of $g$ generic loops as in Definition \ref{ctr}.
  Denote by $\Gamma _i$ the $i^{th}$ loop and by $Q_{i-1}, Q_i$ the points of intersection with $\Gamma_{i-1}, \Gamma_{i+1}$ respectively.  
  Using Lemma \ref{TRR}, every divisor is equivalent to a divisor whose support outside a fixed $Q_j$ has at most one point on the interior of each $\Gamma_i$.
  
\begin{lemma} \label{ordnalTr} Let $\Gamma $ be a general chain  of $g$ loops and $D$ a divisor  on $\Gamma $ of rank at least  $r$. 
For every  $ k=1,\dots ,g, \ \  \ t=0,\dots , r ,\ \ \  i=0,\dots, g$
there exist indices 
 $ \epsilon_{k},   \ \epsilon_{k,t}\in \{ 0, 1 \}$,
 points in the loops  
 $x_{k}\in \Gamma_k-\{ Q_{k-1}\}, \    x_{k,t}\in \Gamma_k$ and integers 
$u_t(i), \  u_0(i)>\dots>u_r(i)\ge 0 $
such that  
\[ D\equiv tQ_{0}+\sum _{k\le i}\epsilon_{k,t}x_{k,t}+u_t(i)Q_i+\sum _{k>i}\epsilon_{k}x_{k}.\]
\end{lemma}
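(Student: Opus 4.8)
The plan is to prove the statement by induction on $i$, building up the normal form loop by loop and using the rank hypothesis on $D$ to force the existence of the vanishing sequence $u_0(i)>\dots>u_r(i)$ at each node $Q_i$. I would first establish the base case $i=0$, where the claimed congruence reads $D\equiv tQ_0+u_t(0)Q_0+\sum_{k>0}\epsilon_k x_k$, i.e.\ all the degree (apart from the $tQ_0$ bookkeeping term) gets pushed to $Q_0$ and to at most one interior point per loop to the right. This is exactly the reduction mentioned just before the lemma statement: using Lemma \ref{TRR}(ii) repeatedly, any divisor is equivalent to one whose support, away from a chosen $Q_j$, meets each loop interior in at most one point. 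The integers $u_t(0)$ are then read off from this reduced form.

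The inductive step is the heart of the argument. Assuming the normal form holds at $Q_{i-1}$, I would move the representation across the loop $\Gamma_i$. On $\Gamma_i$ the relevant piece of the divisor involves $u_t(i-1)Q_{i-1}$ together with the interior contribution $\epsilon_{i}x_{i}$; applying Lemma \ref{TRR} to the loop $\Gamma_i$ with the two marked points $Q_{i-1}, Q_i$ lets me re-express this as $\epsilon_{i,t}x_{i,t}+u_t(i)Q_i$, thereby transferring the $\epsilon_i x_i$ term to the left group and producing the new coefficient $u_t(i)$ at $Q_i$. Here Lemma \ref{TRR}(iii), together with the genericity of the loop lengths in Definition \ref{ctr}, is what guarantees the coefficients at the nodes are \emph{well-defined} and cannot coincide in two different ways. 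The content of the lemma for the reader is precisely the recursion that $u_t(i)$ either drops by one or stays fixed as one crosses a loop, exactly mirroring the behavior of orders of vanishing in Lemma \ref{exfibanul}.

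The step I expect to be the main obstacle is showing that at each node the resulting coefficients $u_t(i)$ are \emph{genuinely distinct and nonnegative} for $t=0,\dots,r$, rather than merely exhibiting \emph{some} equivalent divisor of the stated shape for each individual $t$. The cleanest way to handle this is to invoke the rank hypothesis: because $D$ has rank at least $r$, for each $t$ the divisor $D-tQ_0-(r-t)Q_i$ (or the appropriate combination forcing vanishing of order $t$ on one side and $r-t$ on the other) must be equivalent to an effective divisor, and this effectivity is what pins down $u_t(i)\ge 0$. The strict inequalities $u_0(i)>\dots>u_r(i)$ then follow from genericity via Lemma \ref{TRR}(iii), since equal coefficients at $Q_i$ for two indices $t\neq t'$ would force a relation of the form $(a)Q_i\equiv (a)(\text{another point})$ on $\Gamma_i$ that the generic lengths forbid. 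Assembling these pieces over all $k\le i$ and $k>i$ gives the full congruence, completing the induction.
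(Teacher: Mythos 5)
Your skeleton (induction on $i$, reduction loop by loop via Lemma \ref{TRR}, rank used through effectivity) is the same as the paper's, and you correctly isolate the crux: making the $u_t(i)$ simultaneously nonnegative and \emph{distinct}. But your proposed resolution of the distinctness problem is wrong. You claim that $u_t(i)=u_{t'}(i)$ for $t\neq t'$ would force a relation on $\Gamma_i$ that generic edge lengths forbid. It would not: the representations for different $t$ involve different multiples of $Q_0$ and different interior points $x_{k,t}$, so a collision only yields $tQ_0+\sum_{k\le i}\epsilon_{k,t}x_{k,t}\equiv t'Q_0+\sum_{k\le i}\epsilon_{k,t'}x_{k,t'}$, which genericity does not exclude. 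Indeed, a collision genuinely threatens: if $\epsilon_i=1$ and $x_i$ is $t_0$-special in the sense of Definition \ref{defesp}, i.e. $u_{t_0}(i-1)Q_{i-1}+x_i\equiv(u_{t_0}(i-1)+1)Q_i$, and moreover $u_{t_0}(i-1)+1=u_{t_0-1}(i-1)$, then the ``maximal'' reduction across $\Gamma_i$ gives $u_{t_0}(i)=u_{t_0}(i-1)+1=u_{t_0-1}(i-1)=u_{t_0-1}(i)$, an equality. The paper escapes this not by genericity but by a deliberately non-maximal choice in exactly this case: it sets $\epsilon_{i,t_0}=1$, $x_{i,t_0}=Q_i$, $u_{t_0}(i)=u_{t_0}(i-1)$, which is legal precisely because the points $x_{k,t}$ (unlike the $x_k$) are allowed to sit at the nodes. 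This is case (c) of the paper's proof and of Corollary \ref{sumanul}; its existence also explains why your description of the recursion is off --- $u_t(i)$ can \emph{increase} by one across a loop (case (d)), not only drop or stay fixed, so the analogy with Lemma \ref{exfibanul} is looser than you assert.

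Your use of the rank hypothesis is also misplaced, though in the right spirit. Nonnegativity of the $u_t(i)$ is essentially automatic: each $u_t(i-1)Q_{i-1}+\epsilon_i x_i$ is effective, so its reduction $\epsilon_{i,t}x_{i,t}+u_t(i)Q_i$ has $u_t(i)\ge 0$. Where rank is genuinely needed, beyond giving $u\ge r$ in the base case, is to preserve \emph{strictness} in the one remaining dangerous case $\epsilon_i=0$, $u_r(i-1)=0$: there $u_r(i)=0$ while the other orders drop by one, and one needs $u_{r-1}(i-1)>1$, which the paper deduces from the fact that $D-(r-1)Q_0-Q_i$ is equivalent to an effective divisor. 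So the rank hypothesis enters through effectivity of that specific divisor to force a strict inequality, not to pin down $u_t(i)\ge 0$. Without the explicit case analysis (cases (a)--(e) in the paper) and the special choice at $Q_i$ described above, your induction cannot close.
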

\begin{proof}  Using Lemma \ref{TRR} on each of the loops starting with the last one, one can successively bring most of the degree to $Q_{g-1}, Q_{g-2},\dots, Q_0$
leaving behind at most one point on each loop.
 So, $D$ is equivalent to 
 \[ uQ_{0} +\sum _{k\ge1}\epsilon_{k}x_{k},\]
 where $u$ is chosen as large as possible and $   x_{k}\in \Gamma_k-\{ Q_{k-1}\}$.
  As $D$ has rank at least $r$,   there is an effective divisor equivalent to $D-rQ_0$ and $u \ge r$.
  
Define
 \[ (u_0(0),u_1(0),\dots ,u_r(0))=(u, u-1,\dots, u-r).\]
 With this definition, 
 \[u_0(0)>\dots >u_r(0)\ge 0\] 
and for each $t=0,\dots, r$, $D$ is trivially equivalent to 
 $ tQ_{0}+u_t(0)Q_0+\sum _{k>0}\epsilon_{k}x_{k}$.
 
 Assume now that we found all of the $u_t(j), \epsilon_{j,t},    x_{j,t} ,\  j\le i-1, 0\le t\le r$ .
 Our goal is to find $\epsilon_{i,t}, \ x_{i,t}, u_t(i), \ 0\le t\le r$ such that
\[\ \ (*)  \ \ \  tQ_{0}+\sum _{k\le i}\epsilon_{k,t}x_{k,t}+u_t(i)Q_i+\sum _{k>i}\epsilon_{k}x_{k}\equiv D \]
and \[ u_0(i)>\dots >u_r(i)\ge 0. \] 
By the prior step, 
\[  D\equiv   tQ_{0}+\sum _{k\le i-1}\epsilon_{k,t}x_{k,t}+u_t(i-1)Q_{i-1}+\epsilon_{i}x_{i}+\sum _{k>i}\epsilon_{k}x_{k}.\]
 Using \ref{TRR} on $\Gamma_i$,  there exist  
 $\delta_{i,t}\in \{ 0, 1\}, \ x_{i,t}\in \Gamma _i-\{ Q_i\},  \alpha_t(i)\in \mathbb{Z}^+$ satisfying:
 \[ \ \ \   u_t(i-1)Q_{i-1}+\epsilon_{i}x_{i} \equiv     \delta_{i,t}x_{i,t}+\alpha_t(i)Q_i.     \]
We will choose $ \epsilon_{i,t}=\delta_{i,t}, u_t(i)=\alpha _t(i)$ except when $\epsilon _i=1$ for a $t=t_0$   $u_{t_0}(i-1)Q_{i-1}+x_i\equiv (u_{t_0}(i-1)+1)Q_{i}$ and $u_{t_0}(i-1)+1=u_{t_0-1}(i-1)$. 
In this case,  we choose $\epsilon_{i,t_0}=1, x_{i,t_0}=Q_i, u_{t_0}(i)= u_{t_0}(i-1)$.
With these choices, condition (*) is satisfied and it remains to check that $ u_0(i)>\dots >u_r(i) \ge 0$.

  \begin{enumerate}[(a)]
\item By the genericity of $\Gamma _i$, $u_t(i-1)Q_{i-1}$ is not equivalent to $ u_t(i-1)Q_i$ if $u_t(i-1)>0$. 
Therefore, if  $\epsilon _i=0$ and  $u_r(i-1)>0$, then $ \epsilon _{i,t}=1, t=0,\dots, r$ and
 \[ ( u_0(i),\dots ,u_r(i))= ( u_0(i-1)-1,\dots ,u_r(i-1)-1)\]
The inequalities among the vanishing orders  are then satisfied.
\item  If $\epsilon _i=0$ and  $u_r(i-1)=0$, then $u_r(i-1)Q_{i-1}+\epsilon _ix_i$ is identically zero. 
  Hence, $  \epsilon _{i,r}=0, \epsilon _{i,t}=1; \ \  t=0,\dots, r-1$
  \[ ( u_0(i),u_1(i), \dots ,u_{r-1}(i), u_r(i))= ( u_0(i-1)-1, u_1(i-1)-1,\dots ,u_{r-1}(i-1)-1,u_r(i-1)).  \ \]
  The inequalities are satisfied if $u_{r-1}(i-1)>1$. As $D-(r-1)Q_0-Q_{i}$ is equivalent to an effective divisor, this needs to be the case.
\item If $\epsilon _i=1$, $u_{t_0}(i-1)Q_{i-1}+x_i\equiv (u_{t_0}(i-1)+1)Q_{i}$  and  $u_{t_0}(i-1)+1=u_{t_0-1}(i-1)$, we have $\epsilon_{i,t}=1$ for all $t$ and 
   \[( u_0(i),\dots ,u_r(i))= ( u_0(i-1),\dots ,u_r(i-1)),\  \ x_{i,t_0}=Q_i.\]
 \item  If $\epsilon _i=1$,  $u_{t_0}(i-1)Q_{i-1}+x_i\equiv (u_{t_0}(i-1)+1)Q_{i}$ and  $u_{t_0}(i-1)+1<u_{t_0-1}(i-1)$, then $\epsilon_{i,t}=1$ for all $t\not= t_0$,  $\epsilon_{i,t_0}=0$ 
  \[( u_0(i),\dots , u_{t_0-1}(i),u_{t_0}(i), u_{t_0+1}(i) \dots,u_r(i))=\]
  \[= ( u_0(i-1),\dots ,u_{t_0-1}(i-1),u_{t_0}(i-1)+1, u_{t_0+1}(i-1),\dots ,u_r(i-1)). \]  
\item  If $\epsilon _i=1$ and  $u_t(i-1)Q_{i-1}+x_i$ is not equivalent to $( u_t(i-1)+1)Q_i$ for any $t$,
 then $( u_0(i),\dots ,u_r(i))= ( u_0(i-1),\dots ,u_r(i-1))$ and the inequalities are satisfied. 
  \end{enumerate}
  
  Note that case (a) can be seen as a special case of (e) when $x_i=Q_{i-1}$ while case (c) can be seen as a special case of (e) when $x_{i,t_0}=Q_i$.
\end{proof}
 
 \begin{definition} \label{defesp} Let the  $ u_t(i)$ be defined as in Lemma  \ref{ordnalTr} , we say that $x_i$  is  $t_0$-special if  $u_{t_0}(i-1)Q_{i-1}+x_i\equiv (u_{t_0}(i-1)+1)Q_{i}$. 
 We then write $t(i)=t_0$.
 If $x_i$ is $t$-special for some $t$, we say that $x_i$ is  special. If it is not special, we say it is generic.
 \end{definition}
 
 For easy future reference, we list the values of the vanishing at $Q_i$ depending on the data on the corresponding loop:
\begin{corollary} \label{sumanul} The integers $u_t(i)$ defined in   Lemma  \ref{ordnalTr} satisfy  
\begin{enumerate}[(a)]
\item  If $\epsilon _i=0$ and $u_r(i-1)>0$, then  $u_t(i)=u_t(i-1)-1, t=0,\dots,r$.  
\item If $\epsilon _i=0$ and $u_r(i-1)=0$, then $u_t(i)=u_t(i-1)-1, t=0,\dots,r-1, u_r(i)=u_r(i-1)$.
\item If $x_i$ is  $t_0$-special and  $u_{t_0}(i-1)+1=u_{t_0-1}(i-1)$, then  $u_t(i)=u_t(i-1), t=0,\dots,r$.
\item If $\epsilon _i=1$,  $x_i$ is  $t_0$-special  and  $u_{t_0}(i-1)+1< u_{t_0-1}(i-1)$, then  $u_t(i)=u_t(i-1), t\not= t_0, u_{t_0}(i)= u_{t_0}(i-1)+1$.
\item If $\epsilon _i=1$ and $x_i$ generic, then  $u_t(i)=u_t(i-1), t=0,\dots,r$.  
\end{enumerate}
\end{corollary}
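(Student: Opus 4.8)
The plan is to obtain each of the five cases directly from the case analysis already carried out inside the proof of Lemma \ref{ordnalTr}, after rephrasing the hypotheses in the language of Definition \ref{defesp}. Recall that in that proof we applied Tropical Riemann-Roch (Lemma \ref{TRR}) to the divisor $u_t(i-1)Q_{i-1}+\epsilon_i x_i$ on the loop $\Gamma_i$ and read off $u_t(i)$ from the resulting normal form; the five alternatives (a)--(e) appearing there are exactly the five alternatives to be recorded in the corollary, so the content is bookkeeping rather than fresh computation.

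First I would fix the dictionary. By Definition \ref{defesp}, saying that $x_i$ is $t_0$-special means precisely that $u_{t_0}(i-1)Q_{i-1}+x_i\equiv(u_{t_0}(i-1)+1)Q_i$, which is the equivalence isolated in cases (c) and (d) of the proof of Lemma \ref{ordnalTr}; and $x_i$ generic means this relation fails for every index, which is the hypothesis of case (e) there. With this translation, the hypotheses and the displayed values of $u_t(i)$ in cases (a), (b), (c), (d), (e) of the statement match cases (a), (b), (c), (d), (e) of that proof verbatim, so the conclusions may simply be transcribed.

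Second, I would verify that the five alternatives are exhaustive and mutually exclusive, so that they genuinely describe every possibility. If $\epsilon_i=0$, then either $u_r(i-1)>0$ (case (a)) or $u_r(i-1)=0$ (case (b)). If $\epsilon_i=1$, then either $x_i$ is generic (case (e)) or $x_i$ is $t_0$-special for some $t_0$; in the latter situation the strict monotonicity $u_{t_0}(i-1)<u_{t_0-1}(i-1)$ forces $u_{t_0}(i-1)+1\le u_{t_0-1}(i-1)$, so exactly one of $u_{t_0}(i-1)+1=u_{t_0-1}(i-1)$ (case (c)) or $u_{t_0}(i-1)+1<u_{t_0-1}(i-1)$ (case (d)) holds. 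Here I would also record that the special index $t_0$ is unique: if $x_i$ were both $t_0$-special and $t_0'$-special, subtracting the two equivalences would give $(u_{t_0}(i-1)-u_{t_0'}(i-1))Q_{i-1}\equiv(u_{t_0}(i-1)-u_{t_0'}(i-1))Q_i$, which contradicts the genericity of $\Gamma_i$ through Lemma \ref{TRR}(iii) unless $t_0=t_0'$.

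The main (and essentially only) obstacle is to make sure the translation via Definition \ref{defesp} is faithful and that no case is dropped, in particular that cases (c) and (d) are separated exactly by whether the gap $u_{t_0-1}(i-1)-u_{t_0}(i-1)$ equals one or exceeds one, and that the boundary situations flagged at the end of the proof of Lemma \ref{ordnalTr} (where case (a) is a degeneration of (e) with $x_i=Q_{i-1}$, and case (c) a degeneration of (e) with $x_{i,t_0}=Q_i$) do not cause any double counting. No computation beyond what was performed inside the proof of Lemma \ref{ordnalTr} is needed.
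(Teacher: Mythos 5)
Your proposal is correct and matches the paper's treatment: Corollary \ref{sumanul} is stated there without a separate argument precisely because it is a transcription of cases (a)--(e) from the proof of Lemma \ref{ordnalTr}, read through the language of Definition \ref{defesp}, which is exactly what you do. Your added checks (exhaustiveness of the five cases and uniqueness of the special index $t_0$ via the genericity of $\Gamma_i$) are sound and only make explicit what the paper leaves implicit.
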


We now show the converse of Lemma \ref{ordnalTr} namely

\begin{lemma} \label{ordnalTrinv} Let $\Gamma $ be a general chain  of $g$ loops and $D$ a divisor  on $\Gamma $ 
such that for every  $ k=1,\dots ,g, \ \  \ t=0,\dots , r ,\ \ \  i=0,\dots, g$
there exist indices 
 $ \epsilon_{k},   \ \epsilon_{k,t}\in \{ 0, 1 \}$,
 points in the loops  
 $x_{k}\in \Gamma_k-\{ Q_{k-1}\}, \    x_{k,t}\in \Gamma_k$ and integers 
$u_t(i), \  u_0(i)>\dots>u_r(i)\ge 0 $
such that  
\[ D\equiv tQ_{0}+\sum _{k\le i}\epsilon_{k,t}x_{k,t}+u_t(i)Q_i+\sum _{k>i}\epsilon_{k}x_{k}.\]
Then $D$ has rank at least  $r$.
\end{lemma}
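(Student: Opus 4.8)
The plan is to verify the definition of rank directly: I must show that for every effective divisor $D'$ of degree $r$ on $\Gamma$, the divisor $D-D'$ is equivalent to an effective divisor. The first step is to normalise $D'$. Processing the loops from right to left and applying Lemma \ref{TRR}(ii) on each loop, exactly as in the proof of Lemma \ref{ordnalTr}, I can replace $D'$ by an equivalent effective divisor of the shape $d_0Q_0+\sum_{k\in S}y_k$, where $S\subseteq\{1,\dots,g\}$, each $y_k\in\Gamma_k$, and $d_0=r-|S|\ge 0$. Since $|D-D'|$ depends only on the equivalence class of $D'$, it then suffices to prove that $|D-d_0Q_0-\sum_{k\in S}y_k|\neq\emptyset$.

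Next I would feed the hypothesis into this reduced problem by selecting the index $t=d_0$ (legitimate because $0\le d_0\le r$). The hypothesis provides, for every $i$, an effective divisor $G^{(i)}:=\sum_{k\le i}\epsilon_{k,d_0}x_{k,d_0}+u_{d_0}(i)Q_i+\sum_{k>i}\epsilon_kx_k$ equivalent to $D-d_0Q_0$. The crucial numerical input is that the sequences are strictly decreasing and nonnegative, so that $u_{d_0}(i)\ge r-d_0=|S|$ for every $i$; this is the tropical analogue of the degree count underlying Lemma \ref{exfibanul}, and it is what guarantees there is always enough degree waiting at the junction points to be spent.

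With this in hand I would carry out the absorption of the points $y_k$ one loop at a time, from left to right, keeping track of an \emph{effective} divisor $F_i\equiv D-d_0Q_0-\sum_{k\in S,\,k\le i}y_k$ that carries a transferable multiplicity at $Q_i$ equal to $u_{d_0}(i)-|S\cap\{1,\dots,i\}|\ge 0$. To pass loop $i$ I combine the transferable degree arriving at $Q_{i-1}$ with the point(s) that $G$ places on $\Gamma_i$; when $i\in S$, Lemma \ref{TRR}(ii) lets me rewrite this positive-degree divisor on $\Gamma_i$ so that it contains the prescribed point $y_i$, after which I push the residual degree forward to $Q_i$. The transitions mirror precisely those tabulated in Corollary \ref{sumanul}, read now in the presence of the extra subtracted point. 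Taking $i=g$ produces an honest effective divisor equivalent to $D-d_0Q_0-\sum_{k\in S}y_k$, which completes the argument.

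I expect the main obstacle to lie in this last step, and specifically in the degenerate loops where the degree available on $\Gamma_i$ is as small as possible. On a single loop a divisor of degree one need not be equivalent to a prescribed point, so repositioning a free point onto $y_i$ can fail unless at least two units of degree are present on the loop. The content of the argument is that the strict inequalities $u_0(i)>\cdots>u_r(i)\ge 0$, together with the five cases of Corollary \ref{sumanul}, always leave enough slack, a spare unit coming either from the generic point $x_i$ or from the special drop $x_{i,d_0}$, to place the point exactly at $y_i$ while preserving the transferable-degree invariant. This is the tropical mirror of the determination of the line bundle together with its sections in Lemma \ref{exfibanul}, and it is where the genericity of the chain of loops enters decisively.
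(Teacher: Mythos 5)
Your plan diverges from the paper's proof at the decisive first step, and the divergence is fatal. The paper does \emph{not} reduce $D'$ to the form $d_0Q_0+\sum_{k\in S}y_k$ with points $y_k$ left in the interiors of the loops; it invokes Luo's theorem on rank-determining sets (Theorem 1.6 of \cite{Luo}) to conclude that it suffices to test $D-D'$ for divisors $D'$ supported entirely on $\{Q_0,\dots,Q_g\}$. This removes the repositioning problem altogether: one never has to place a chip on a prescribed interior point. The paper then sweeps left to right with a \emph{varying} index: at loop $i$ it uses the hypothesis for $t=a_0+\cdots+a_{i-1}$, subtracts $a_i$ at $Q_i$, and re-indexes via the inequality $u_t(i)-a_i\ge u_{t+a_i}(i)$ (valid by strict decrease), so that the full multiplicity $u_t(i)$ required by the next hypothesis equivalence is always present at the node. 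You instead keep $t=d_0$ fixed and track the deficit $u_{d_0}(i)-|S\cap\{1,\dots,i\}|$; neither Luo's theorem nor the re-indexing inequality appears in your argument, and these are precisely the two ingredients that make the paper's proof work.

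The gap is not merely that you defer the hard step; your transferable-degree invariant provably cannot be maintained. Suppose loop $i$ is special for the index $d_0$ in the sense of Corollary \ref{sumanul}(d): $\epsilon_i=1$, $\epsilon_{i,d_0}=0$, and $u_{d_0}(i-1)Q_{i-1}+x_i\equiv u_{d_0}(i)Q_i$ with $u_{d_0}(i)=u_{d_0}(i-1)+1$; and suppose $s:=|S\cap\{1,\dots,i-1\}|\ge 1$ points have already been absorbed. Take $i\notin S$, so your invariant demands that the loop-$i$ divisor $(u_{d_0}(i-1)-s)Q_{i-1}+x_i$, whose degree is exactly $u_{d_0}(i)-s$, be equivalent to an effective divisor with multiplicity $u_{d_0}(i)-s$ at $Q_i$, i.e.\ equivalent to $(u_{d_0}(i)-s)Q_i$ on the nose. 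Comparing with the known equivalence forces $sQ_{i-1}\equiv sQ_i$ on $\Gamma_i$, i.e.\ $Q_{i-1}-Q_i$ is torsion, which is exactly what genericity of the chain excludes (the same genericity invoked in case (a) of the proof of Lemma \ref{ordnalTr}). So after the first absorbed point, every subsequent $d_0$-special loop strands at least one unit of degree away from $Q_i$, and the deficit compounds; the ``spare unit'' you hope for does not exist there, because in this case \emph{all} of the loop's degree is needed at the outgoing node, and the failure occurs even on loops where nothing is being absorbed. A left-to-right absorption can in fact be salvaged, but only by letting the index $t$ increase as chips are absorbed (mirroring the paper's use of $u_t(i)-a\ge u_{t+a}(i)$) and by a case analysis showing the degree-one degenerate case never occurs for the current index; as written, your fixed-$t$ scheme is wrong, not just incomplete.
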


\begin{proof}
In order to show that $D$ has rank $r$, it suffices to see that for any divisor $D'$  of degree at most $r$ with support at $Q_0,\dots, Q_g$,
$D-D'$ is equivalent to an effective divisor (see Theorem 1.6 in \cite{Luo}).
Write $D'=a_0Q_0+\dots+a_gQ_g$. Recall that $D$ is equivalent to  
$a_0Q_0+\epsilon _{1,a_0} x_{1,a_0}+ u_{a_0}(1) Q_1+\sum_{i\ge 2}\epsilon_ix_i$. Then, $D-D'$ is equivalent to
\[ \epsilon _{1,a_0} x_{1,a_0}+ (u_{a_0}(1)-a_1) Q_1+\sum_{i\ge 2}\epsilon_ix_i-\sum _{j\ge2 }a_jQ_j.\]

As $u_0(1)>u_1(1)>\dots >u_r(1)$, $u_{a_0}(1)-a_1\ge u_{a_0+a_1}(1)$.
So, it suffices to check that  
\[  \epsilon _{1,a_0} x_{1,a_0}+ (u_{a_0+a_1}(1)) Q_1+\sum_{i\ge 2}\epsilon_ix_i-\sum _{j\ge2 }a_jQ_j\] 
is effective. This divisor is equivalent to 
\[ \epsilon _{1,a_0} x_{1,a_0}+\epsilon _{2,a_0+a_1} x_{2,a_0+a_1}+  (u_{a_0+a_1}(2)-a_2)Q_2+\sum_{i\ge 3}\epsilon_ix_i-\sum _{j\ge2 }a_jQ_j.\] 

As $u_{a_0+a_1}(2)-a_2\ge u_{a_0+a_1+a_2}(2)$, it suffices to check that  
\[  (u_{a_0+a_1+a_2}(2)) Q_2+\sum_{i\ge 3}\epsilon_ix_i-\sum _{j\ge3 }a_jQ_j\] 
is effective. 
Repeating the argument above $g-1$ times, it will suffice to show that 
\[  \sum_{i=1,\dots,g} \epsilon _{i,a_0+\dots +a_{i-1}} x_{i,a_0+\dots +a_{i-1}}+ (u_{a_0+a_1+\dots +a_{g-1}} (g)-a_g)Q_g\]
is effective. As  $u_{a_0+a_1+\dots +a_{g-1}(g)}-a_g\ge u_{a_0+a_1+\dots +a_{g}}(g)$ and by assumption $a_0+a_1+\dots +a_{g}\le r$,
 then $ u_{a_0+a_1+\dots +a_{g}}(g)\ge u_r(g)\ge 0$ therefore $u_{a_0+a_1+\dots +a_g} (g)$ is well defined and greater than or equal to $0$.
\end{proof}

 \begin{theorem} \label{espaiBNTrop} The Brill-Noether locus  of degree $d$ and rank $r=k-1$ on a general chain of $g$ loops
is a union of  
\[ {g\choose {\rho}} c(k, \bar k)\]
 products of $\rho$ loops corresponding to the $c(k, \bar k)$ fillings of the $k\times \bar k$ Young diagram with $g-\rho=k\bar k$ numbers from the set $1,2,\dots, g$.
The loops appearing in the  product  are the ones whose indices do not appear in the corresponding tableau.
\end{theorem}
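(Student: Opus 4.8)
The plan is to run, on the tropical side, the argument that proves Theorem \ref{teorserlim}, with Lemmas \ref{ordnalTr} and \ref{ordnalTrinv} and Corollary \ref{sumanul} playing the roles that Lemmas \ref{cotsupanul} and \ref{exfibanul} played there. First I would fix a divisor $D$ of rank at least $r$ and attach to it, via Lemma \ref{ordnalTr}, the vanishing data $u_t(i)$ at the nodes $Q_i$ together with the points $x_i$ and the indices $\epsilon_i,\epsilon_{i,t}$. Reading off $\deg D=d$ from the equivalence in Lemma \ref{ordnalTr} gives $u_t(i)=d-t-\sum_{k\le i}\epsilon_{k,t}-\sum_{k>i}\epsilon_k$, hence the boundary value $u_t(0)=u-t$ with $u=d-\sum_{k}\epsilon_k\ge r$, and the one-step recurrence $u_t(i)-u_t(i-1)=\epsilon_i-\epsilon_{i,t}\in\{-1,0,1\}$, which merely restates Corollary \ref{sumanul}. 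The only moduli available are the positions of the generic points $x_i$ (case (e) of Corollary \ref{sumanul}); each such loop contributes one to the dimension, so the dimension of the component through $D$ equals the number of generic (free) loops.

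For the upper bound I would sum the recurrence over all $t=0,\dots,r$ and $i=1,\dots,g$. Using $u_t(0)=u-t$ and the forced inequality $u_t(g)\ge r-t$ (the $u_t(g)$ being strictly decreasing and non-negative), the resulting identity collapses to $\sum_{i,t}\epsilon_{i,t}\le (r+1)(d-r)$. Since $\sum_t\epsilon_{i,t}$ equals $r+1$ in cases (a), (c), (e) and $r$ in cases (b), (d), writing $N_r$ for the number of loops of the second kind this reads $N_r\ge (r+1)\bar k=k\bar k$. Free loops are of case (e), hence of the first kind, so the number of free loops is at most $g-N_r\le g-k\bar k=\rho$: every component has dimension at most $\rho$, matched by the explicit $\rho$-dimensional families constructed below. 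Tracking when all these inequalities are equalities shows that at a general point of a top-dimensional component there are no loops of type (a) or (c), that $u_t(g)=r-t$, and --- pushing the equality for $t=r$ through the recurrence --- that in fact $u=r$, so that $u_t(i)=r-t+m_{i,t}-m_{i,r}$, where $m_{i,s}$ counts the determined loops $j\le i$ placed in column $s$.

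With this normal form in hand the bijection with tableaux is identical to the one in Theorem \ref{teorserlim}. Each of the $g-\rho=k\bar k$ determined loops carries a column index $t(i)$, namely the unique order for which $\epsilon_{i,t}=0$ (so that no residual point $x_{i,t}$ remains): this is $t_0$ for a type (d) loop and $r$ for a type (b) loop. I place the index $i$ in the first free box of column $t(i)$, landing it in row $m_{i,t(i)}$. The admissibility conditions of Corollary \ref{sumanul}(d) and of case (b) both reduce, through $u_t(i)=r-t+m_{i,t}-m_{i,r}$, to the single inequality $m_{i-1,t(i)}<m_{i-1,t(i)-1}$, which is exactly condition (\ref{condY}); this forces the entries to increase along rows, while they increase down columns by construction, so one obtains a standard tableau, and the computation $m_{g,s}=\bar k$ for every $s$ shows the $k\times\bar k$ diagram is exactly filled. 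Conversely, given a filled standard tableau and arbitrary points on the $\rho$ remaining loops, the normal form defines the $u_t(i)$ and Lemma \ref{ordnalTrinv} certifies that the resulting divisor has rank at least $r$; letting the free points move sweeps out a product of the $\rho$ loops whose indices are absent from the tableau. Counting the $\binom{g}{\rho}$ choices of determined indices against the $c(k,\bar k)$ standard fillings yields the stated list of components.

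The main obstacle I anticipate is not the tableau bookkeeping --- which transcribes almost verbatim from Theorem \ref{teorserlim} once the normal form is available --- but the passage from the crude dimension bound to that normal form: one must show that at a general point of a maximal component the degenerate cases (a), (b) as written, and (c), of Corollary \ref{sumanul} are controlled, that the boundary value $u_t(g)=r-t$ is genuinely attained, and that this forces $u=r$. Equivalently, the real work is in checking that every inequality used in the counting step is simultaneously an equality on a dense open subset of each component, and in separating the genuine moduli (case (e)) from the degenerations of it flagged in the remark following Lemma \ref{ordnalTr}.
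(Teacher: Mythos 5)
Your proposal is correct and follows essentially the same route as the paper's own proof: the paper likewise normalizes $D$ via Lemma \ref{ordnalTr}, obtains the bound $\alpha\le\rho$ by the same per-loop count from Corollary \ref{sumanul} (tracking $\sum_t u_t(i)$ rather than your equivalent $\sum_t \epsilon_{i,t}$), forces $u_r(i)=0$, hence $u=r$, and types (b), (d), (e) at equality, and then sets up the identical tableau correspondence via condition (\ref{condY}), with the converse direction supplied by Lemma \ref{ordnalTrinv}. The differences are purely in bookkeeping, not in substance.
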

\begin{proof} 
Recall that $D$ is equivalent to a divisor of the form $uQ_0+\sum \epsilon _ix_i$ where $u=u_r(0)$.
The orders of vanishing at $Q_0$ were defined as  $( u_0(0),\dots ,u_r(0))=(u-0,\dots, u-r)$ . 
Hence, $\sum _tu_t(0)=(r+1)u-(1+\dots+r)$ .

As the divisor $D$ has degree $d$ and is equivalent to $uQ_0+\sum \epsilon _ix_i$, 
\[ u+\sum_{i=1}^g \epsilon _i=d.\]
So $\sum_{i=1}^g \epsilon _i=d-u$ and there are  $g-d+u$ loops $\Gamma _i$ where the $\epsilon _i=0$.
 Write $\alpha $ for the number of loops where $\epsilon _i=1$ and $x_i$ is generic.
There remain  $d-u-\alpha$ loops where $\epsilon _i=1$ and the $x_i$ is special. 
It follows from Corollary \ref{sumanul}    that 
\[   \sum _{t=0}^ru_t(g)\le  \sum _{t=0}^ru_t(0)-r(g-d+u)+d-u-\alpha \]
\[=\ (r+1)u-(1+\dots+r) -r(g-d+u)+d-u-\alpha \]
with equality when(with the notation in Corollary \ref{sumanul} ) the loops with $\epsilon _i=0$ correspond to case (b) and those with $\epsilon _i=1, x_i$ special correspond to case (d).

On the other hand, as $u_0(g)>\dots >u_r(g)$, the orders of vanishing at $Q_g$ are at least 
$r,\dots,0$. Hence,  
\[r+\dots +1\le  \sum _{t=0}^ru_t(g).\]
 The two inequalities together give
 \[r+\dots +1\le   (r+1)u-(1+\dots+r) -r(g-d+u)+d-u-\alpha,\]
 which gives rise to 
 \[  \alpha\le  (r+1)u-(1+\dots+r) -r(g-d+u)+d-u-(r+\dots+1)=\rho.\]
  Equality in the above inequality  is achieved when the vanishing at both $Q_0, Q_g$ are $(r,\dots, 0)$ and on intermediate components the $\epsilon _i, x_i$
  correspond to choices (with notations as in Corollary \ref{sumanul} ) of type (b),(d), (e).
In situation (b), $\epsilon _i=0$ and there are no further choices to make.
In situation (d), $\epsilon _i=1$, there is a $t_0=t(i)$ such that   $u_{t(i)}(i-1)+1<u_{t(i)-1}(i-1)$.
The $x_i$ is determined by the $ u_{t(i)}(i-1)$, so the only choice is that of the index $t(i)$.
There are no restrictions on when to make a choice of type (e) and then on how to choose $x_i$.
     As $\alpha $ gives the number of loops on which the point is free to vary, optimal choices as in (b), (d) and (e) give rise to a product of $\rho$ loops.
      On the other hand, we pointed out that cases (a) and (c) can be seen as limiting cases of (e).
      Therefore, our loci are products of $\rho $ loops.
 
As $u_r(g)=0$ and in cases (b),  (d), (e) , $ u_r(i)\ge u_r(i-1)$ it follows that  $u_r(i)=0, i=0,\dots, g$ for a generic point on each such loop.

     As $u_s(0)-u_{s-1}(0)=1$ for all $s$, and a choice of type (d) requires $u_{t(i)}(i-1)+1<u_{t(i)-1}(i-1)$, 
     we can only choose $t(i)$ in a type (d) choice for the $n^{th}$ time if each of $0,\dots, t(i)-1$ have already been chosen   at least $n$ times. 
     Similarly, a choice of type (b) can only be made for the $n^{th } $ time if choices of type (d) have been made at least $n$ times for each of the vanishings 
     $0,\dots, r-1$. 
    
    Now construct a Young tableau associated to a component as follows. We number the columns of the tableau from $0 $ to $r$.
     The component determines $\rho$ loops where the $x_i$ will be generic. These $\rho$ loops 
    can be any of  the $g$ loops of $\Gamma$ .    Assign the indices  of the remaining loops successively to one spot of the tableau.
     An index $i$ will be placed in the first empty spot in the column $t(i)$ for a choice of type (d) corresponding to the vanishing $u_{t(i)}$.
    An index $i$ will be placed in the first empty spot in the column $r$ if it corresponds to a choice of type (b).
    By construction, the filling in the columns increase as you go down. Our arguments show that the fillings increase as you move right on a row.
    As in the limit linear series case, we need to show that each column has height $\bar k$.
   Note that $u_t(0)=u-t, u_t(g)=r-t, u+\sum \epsilon_i=d$. 
   Moreover, $u_t(i)=u_t(i-1)-1$ if $\epsilon _i=0$. If  $\epsilon _i=1$,  $u_t(i)=u_t(i-1)$ if $i$ is not $t$-special while $u_t(i)=u_t(i-1)+1$  if $i$ is $t$-special.
   Therefore, $r-t=u_t(g)=u-t-(g-\sum \epsilon _i)+\alpha_t$ with $\alpha _t$ the height of column $t$. It follows that $\alpha _t=\bar k$ for all $t$.
    
    Conversely, if we start with a Young tableau, we can construct a component of the Brill-Noether locus
    as the product of the loops whose indices do not appear in the tableau. 
    If $i$ appears in column $t_0$, write $t(i)=t_0$. Denote by $\beta_{i,t}=\sum _{\{ j\le i\  \} }\delta_{t(j),t}$.
Before defining the divisor corresponding to a point in the component, we need  to say  what we want as the $\epsilon _i$ and the vanishing at the $Q_i$.
Start with $(u_0(0),\dots ,u_r(0))=(r,\dots, 0)$. 
If an index $i$ does not appear on the tableau, take $\epsilon _i=1$ and indices 
    \[(u_0(i),\dots ,u_r(i))=(u_0(i-1),\dots ,u_r(i-1)).\]
      If  $t(i)<r$, take $\epsilon _i=1$ and indices  
     \[(u_0(i),u_1(i),\dots, u_{t(i)}(i),\dots ,u_r(i))=(u_0(i-1),u_1(i-1),\dots ,u_{t(i)}(i-1)+1,\dots ,u_r(i-1)).\]
    If $t(i)=r$,  take $\epsilon _i=0$ 
    \[(u_0(i),u_1(i),\dots ,u_r(i))=(u_0(i-1)-1,u_1(i-1)-1,\dots ,u_{r-1}(i-1)-1,u_r(i-1))\] 
     Note that with this construction, $u_r(i)=0$ for all $i$. 
     
     Then, $u_{t(i)}(i)=u_{t(i)}(i-1)+1 $ if $t(i)=s<r$, $u_{s}(i)=u_{s}(i-1)-1 $ if   $\epsilon _i=0$ and otherwise $u_{s}(i)=u_{s}(i-1)$.
     Therefore, 
     \begin{equation}\label{ordantrop} u_{s}(i)=r-s+\sum_{j\le i}\delta_{s,t(j)}-\sum_{j\le i}\delta_{r,t(j)}=r-s+\beta_{i,s}-\beta_{i,r}\end{equation}
     In particular,  $u_{t(i)}(i) =r-t(i)+\beta_{i,t(i)}-\beta_{i,r}$. 
     As $x_i$ is the unique point such that $u_{t(i)}(i-1)Q_{i-1}+x_i\equiv (u_{t(i)}(i-1)+1)Q_{i}$, we have
           \begin{equation}\label{divtrop} \ (r-t(i)+\beta_{i,t(i)}-\beta_{i,r}-1 )Q_{i-1}+x_i\equiv (r-t(i)+\beta_{i,t(i)}-\beta_{i,r} )Q_{i}.\end{equation}
    For the components whose indices do not appear in the tableau, choose a generic point $x_i$. 
    The generic divisor corresponding to the tableau is then of the form 
    $rQ_0+\sum \epsilon_ix_i$.
\end{proof}

\section{Effective limit linear series}\label{serpos}

We mentioned that the definition of limit linear series comes from concentrating all of the degree of a line bundle successively on each of the components
of a curve of compact type.
The goal of this section is to show that for refined limit linear series, one can concentrate most of the degree and all of the sections on one component while allowing the line bundle to 
still be effective on the remaining components:

\begin{proposition}\label{spdsl}
Assume that $C$ is a curve of compact type with irreducible components $C_j, j=1,\dots, M$. 
Let  $\{ L_j, V_j\subset H^0(C_j, L_j), j=1, \dots , M\}$ be the data of a limit linear series of degree $d$ and dimension $r$ on $C$.
Choose a component $C_{i}$ of $C$. For each $C_j$, let $P_{j,1}, \dots, P_{j,k_j}$  be the set of nodes in $C_j$,   
$X_{j, 1},  \dots, X_{j,k_j}$ the connected components of  $C-C_j$ and  
\[ u_0(j,l)> \dots >u_r(j,l)\ge 0, j=1,\dots, M, l=1,\dots, k_j,\]
 the orders of vanishing of the sections of  $V_j$ at $P_{j, l}$.
If $j\not= i$, let $X_{j, l(j,i)}$be the connected component of $C-C_j$ whose closure  contains $C_i$.

Define a line bundle on $C_j$ by
\begin{align*}
L_{j,i}&=L_j(-u_0(j, l(j,i))P_{j, l(j,i)}-\sum_{l\not= l(j,i)} u_r(j, l)P_{j,l})\\ 
L_{j,j}&=L_j(-\sum_{l=1,\dots, k_j} u_r(j, l)P_{j,l})
\end{align*}
and let $L^i$ be the line bundle obtained by gluing the $L_{j,i}$. 
Note that for $j=i$, no component $X_{j,l}$ contains $C_i$, so the second equation is compatible with the first with the understanding that $l(i,i)$ does not exist.
Then:
\begin{enumerate}
\item The line bundle $L^i$ has degree $d$ on $C$.
\item The restriction of $L^i$ to $C_i$ has a space of sections of dimension $r+1$ that correspond naturally with the sections in $V_i$.
\item The restriction of $L^i$ to $C_j$ has one section.
\end{enumerate}
\end{proposition}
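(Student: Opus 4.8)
The plan is to verify the three assertions separately, treating the degree computation in (1) as the main step and deriving (2) and (3) by exhibiting explicit sections. Throughout I use, as in the standing hypothesis of this section, that the series is \emph{refined}, so that at every node the two vanishing sequences are exactly complementary: if the node is obtained by gluing $P_{j_1}$ to $P_{j_2}$, then $u_t(j_1)+u_{r-t}(j_2)=d$ for all $t$ (Definition \ref{lls}); the case $t=0$ gives $u_0(j_1)+u_r(j_2)=d$, which is what the degree count will consume.

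For (1), I would first use that $C$ is of compact type, so its dual graph is a tree with $M$ vertices and exactly $M-1$ edges (nodes). Root this tree at $C_i$ and orient every edge toward $C_i$. Since total degree on a nodal curve is the sum of the degrees on the components, $\deg L^i=\sum_j \deg L^i|_{C_j}=Md-T$, where $T$ is the total degree of the divisors subtracted in forming the $L_{j,i}$. The key point is that $T$ breaks up as a sum over nodes. At a node $\alpha$ joining a component $C_a$ (farther from $C_i$) to its neighbour $C_b$ (closer to $C_i$), the node points toward $C_i$ as seen from $C_a$, so on $C_a$ it is the node $P_{a,l(a,i)}$ and is twisted by $u_0$; as seen from $C_b$ the component $X_{b,\cdot}$ containing $C_a$ cannot contain $C_i$ (removing $C_b$ already separates $C_a$ from $C_i$), so $\alpha$ is twisted by $u_r$ on $C_b$. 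Thus the contribution of $\alpha$ to $T$ is $u_0(\text{on }C_a)+u_r(\text{on }C_b)$, which by the refined condition equals $d$. Summing over the $M-1$ nodes gives $T=(M-1)d$, hence $\deg L^i = Md-(M-1)d = d$. The boundary case $C_b=C_i$ is consistent with the formula for $L_{i,i}$, in which every node of $C_i$ is twisted by $u_r$.

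For (2) and (3) I would produce the sections by hand. Because $u_r(i,l)$ is the least order in the vanishing sequence at $P_{i,l}$, every section of $V_i$ vanishes to order at least $u_r(i,l)$ at each node $P_{i,l}$; hence $V_i \subseteq H^0\bigl(C_i, L_i(-\sum_l u_r(i,l)P_{i,l})\bigr)=H^0(C_i, L^i|_{C_i})$, and the tautological inclusion identifies $V_i$ with an $(r+1)$-dimensional space of sections of $L^i|_{C_i}$, which is (2). For (3), fix $j\neq i$ and let $s_0$ span the one-dimensional subspace of $V_j$ of sections vanishing to the maximal order $u_0(j,l(j,i))$ at the node $P_{j,l(j,i)}$ pointing toward $C_i$. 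At every other node $P_{j,l}$ the section $s_0$ vanishes to order at least $u_r(j,l)$, again since $u_r$ is the minimum of the sequence. Therefore $s_0$ is a nonzero section of $L_j(-u_0(j,l(j,i))P_{j,l(j,i)}-\sum_{l\neq l(j,i)}u_r(j,l)P_{j,l})=L^i|_{C_j}$, so this restriction is effective, which is (3).

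The only genuine obstacle is the bookkeeping in (1): one must check that, once the tree is rooted at $C_i$, each node is ``outgoing'' ($u_0$-twisted) from exactly one of its two components and ``incoming'' ($u_r$-twisted) from the other, so that the refined relation $u_0+u_r=d$ is applied exactly once per node. This is precisely the claim that $X_{b,\cdot}\ni C_a$ forces $X_{b,\cdot}\not\ni C_i$, which is immediate from the tree structure, and parts (2) and (3) are then formal. I would also remark that the refined hypothesis is essential: for a non-refined series one has only $u_0+u_r\ge d$ at each node, which yields $\deg L^i \le d$ rather than the asserted equality.
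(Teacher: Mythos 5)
Your proof is correct and takes essentially the same route as the paper's: the degree count in (1) is precisely the paper's reorganization of the sum over components into a sum over the $M-1$ nodes, applying the refined relation $u_0+u_r=d$ once per node, and your sections for (2) and (3) (the inclusion of $V_i$ into $H^0(C_i,L^i|_{C_i})$, and the unique section of $V_j$ with maximal vanishing at $P_{j,l(j,i)}$) are exactly the ones the paper exhibits. Your explicit remark that the refined hypothesis is what makes the degree come out to exactly $d$ (rather than $\le d$) is a useful clarification, since the proposition's statement leaves that hypothesis tacit while the paper's proof invokes it.
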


\begin{proof} 
Note that, because the curve $C$ is of compact type, a line bundle on $C$ is completely determined by its restriction to each component. 
So the line bundle $L^i$ on $C$ is well-defined. 

By definition, the restriction $L_{j,i}$ of $L^i$ to $C_j$ is the subsheaf  of sections of $V_j$ generated by those sections with the highest order of vanishing at the node 
closer to $C_i$ and the lowest order of vanishing at the nodes that are further away from $C_i$. 
On the component $C_i$, we look at sections with the lowest order of vanishing at all nodes,
as none of the closures of the irreducible components of $C-C_i$ contains  $C_i$.
 
We now prove our claims:
\begin{enumerate}
\item The degree of a line bundle on a reducible curve is the sum of the degrees of the restriction to each component:
 \[ \deg L^i=\sum_{j=1,\dots, M} \deg L_{j,i}=\sum_{j=1,\dots, M}(d-u_0(j, l(j,i))-\sum_{l\not= l(j,i)} u_r(j, l)).\]
 This sum is ordered with respect to the components $C_j$ of $C$. We can reorder it instead  with respect to  the nodes $P_{\alpha}$ of $C$.
   Every node $P_{\alpha}, \alpha =1,\dots, M-1$, is the  
 intersection of two irreducible components $C_{j_1(\alpha )}, C_{j_2(\alpha )} $ of $C$.
  We choose the indices so that  $C_{j_1(\alpha )}$  is on the same connected component of $C-P_{\alpha}$ as $C_i$ (possibly $C_{j_1(\alpha )}=C_i$)
   and $C_{j_2(\alpha )}$  is not on the same connected component of $C-P_{\alpha}$ as $C_i$.
  Then,  for $C_{j_1(\alpha )}$, either $C_{j_1(\alpha )}=C_i$ or $P_{\alpha}$ is a node that is far from $C_i$ (meaning $P_{\alpha}=P_{j_1(\alpha), l_k}, \  l_k \not= l(j_1(\alpha),i)$). 
  In either case, we are using the vanishing $u_r$ in the definition of $L_{j,i}$.
  For  $C_{j_2(\alpha )}$, $P_{\alpha}$ is a node that is close to $C_i$ (meaning $P_{\alpha}=P_{j_1(\alpha),  l(j_2(\alpha),i)}$).  
   We  rewrite the  equation for the degree of $L^i$ as  
 \[ \deg L^i= d+\sum _{\alpha =1,\dots, M-1}(d-u_r(j_1(\alpha), l(j_1(\alpha, i)))-u_0(j_2(\alpha) ,l_k) ).\]
 If the limit linear series is generic and therefore refined, $u_r(j_1(\alpha ), l(j_1(\alpha), i))+u_0(j_2(\alpha) , l_k)  =d$. Then $ \deg L^i= d$, as claimed.
 \item The sections of $V_i$ vanish at every node $P_{i,l}$ with vanishing at least $u_r(i,l)$. Therefore, the space of sections of 
 $L^i$ restricted to $C_i$ contains all the sections in $V_i$ when considered as sections of $L^i_{|C_i}$ and $V_i$ is a space of dimension $r+1$ by assumption.
 \item On  a component $C_j, j\not=i$, we are considering sections that vanish at one of the nodes with highest order of vanishing. There is one such section on $V_j$ and it vanishes at all other nodes with at least the minimum vanishing. So this section 
 survives in the restriction of $L^i$ to $C_j$.
 \end{enumerate}
 \end{proof}

The data we introduced in Proposition \ref{spdsl} of the line bundles $L^i$ defined on the whole reducible curve $C$
is redundant. As in the case of limit linear series, we could minimize the data by considering  only the restrictions of the $L^i$ to $C_i$ and the corresponding space of sections on the components $C_i$ only. We give here a definition and we show that effective linear series are equivalent to the Eisenbud-Harris limit linear series.

\begin{definition} \label{defeff}An \emph{effective linear series} of degree $d$ and dimension $r$ on a curve of compact type $C$ with components $C_i, i=1,\dots, M$, and nodes $P_{\alpha}, \alpha=1,\dots , M-1$, consists of the following data:
\begin{enumerate}
\item A line bundle $L_{i,i}$ of degree $d_i$ on $C_i,\  i=1,\dots ,M$.
\item A space of sections $W_{i}$ of dimension $r+1$ of $L_{i,i}$.
\item For each node $P_{\alpha}$ obtained as the intersection of two irreducible components 
$C_{j_1(\alpha )}, C_{j_2(\alpha )} $ of $C$, an integer $a_{\alpha}, r\le a_{ \alpha} \le d_{j_i(\alpha)}$
\end{enumerate}
These data should satisfy the conditions:
\begin{enumerate}[(a)]
\item $\sum_{i=1,\dots,M}d_i-\sum _{\alpha=1,\dots, M-1}a_{\alpha}=d$.
\item For a node $P_{\alpha}$, consider the orders of vanishing  of the sections of $W_{j_1(\alpha)}$  at the node
(resp the orders of vanishing of the sections of $W_{j_2(\alpha)}$)
\[w_0(j_1(\alpha), \alpha)>\dots>w_r(j_1(\alpha), \alpha)\ \ \ \ \ \ \ \ \ w_0(j_2(\alpha), \alpha)>\dots> w_r(j_2(\alpha), \alpha) \]
Then, $w_t(j_1(\alpha), \alpha)+w_{r-t}(j_2(\alpha), \alpha)\ge a_{\alpha}, t=0,\dots,r$.
\item For each component $C_j$ and every node $P_{\alpha}$ on $C_j$, $W_j(-a_{\alpha}P_{\alpha})$ has at least one section
\end{enumerate}
The series will be called refined when there is an equality in the last condition in (b) for all nodes and all $t$.
\end{definition}

\begin{proposition}
The data of a refined limit linear series and of a refined  effective linear series are equivalent.
\end{proposition}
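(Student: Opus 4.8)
I would prove the equivalence by exhibiting two mutually inverse constructions between the two kinds of data, using crucially that $C$ is of compact type, so that its dual graph is a tree.

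\textbf{From a refined limit linear series to a refined effective linear series.} Given $\{L_i,V_i\}$, I would take for $L_{i,i}$ exactly the diagonal bundle of Proposition \ref{spdsl}, namely $L_{i,i}=L_i(-\sum_{\alpha\ni i}u_r(i,\alpha)P_\alpha)$, where $u_0(i,\alpha)>\dots>u_r(i,\alpha)$ are the vanishing orders of $V_i$ at the node $P_\alpha$ on $C_i$. Every section of $V_i$ vanishes at $P_\alpha$ to order at least $u_r(i,\alpha)$, so $V_i$ maps isomorphically onto an $(r+1)$-dimensional space $W_i\subset H^0(C_i,L_{i,i})$, with vanishing orders $w_t(i,\alpha)=u_t(i,\alpha)-u_r(i,\alpha)$; in particular $w_r(i,\alpha)=0$ and $w_0(i,\alpha)\ge r$. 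I would then set $a_\alpha=d-u_r(j_1(\alpha),\alpha)-u_r(j_2(\alpha),\alpha)$. The refined identity $u_t(j_1)+u_{r-t}(j_2)=d$ gives $w_t(j_1,\alpha)+w_{r-t}(j_2,\alpha)=a_\alpha$ for every $t$, which is condition (b) in refined form and shows $a_\alpha$ is well defined; moreover $a_\alpha=w_0(j_1,\alpha)\ge r$. The section of $V_{j_1}$ realizing $u_0(j_1,\alpha)$ at $P_\alpha$ has total vanishing at most $\deg L_{j_1}=d$, giving $u_0(j_1,\alpha)+\sum_{\beta\neq\alpha}u_r(j_1,\beta)\le d$, which together with the refined identity $u_0(j_1,\alpha)+u_r(j_2,\alpha)=d$ yields $a_\alpha\le d_{j_1}$, and symmetrically $a_\alpha\le d_{j_2}$. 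Condition (c) holds because the top section at $P_\alpha$ realizes $w_0(i,\alpha)=a_\alpha$, and the degree identity (a) is the same node-versus-component count already carried out in the proof of Proposition \ref{spdsl}.

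\textbf{From a refined effective linear series to a refined limit linear series.} The key first observation is that condition (c) together with the refined form of (b) forces $w_r(i,\alpha)=0$ and $w_0(i,\alpha)=a_\alpha$ at every node: (c) gives $w_0(j_1,\alpha)\ge a_\alpha$, while the $t=0$ instance of refined (b) gives $w_0(j_1,\alpha)=a_\alpha-w_r(j_2,\alpha)\le a_\alpha$, and $w_r(j_2,\alpha)\ge 0$ forces equality. I would then recover $L_i$ from $L_{i,i}$ by adding degree back at the nodes: set $L_i=L_{i,i}(\sum_{\alpha\ni i}u_r(i,\alpha)P_\alpha)$, with non-negative integers $u_r(i,\alpha)$ required to satisfy $u_r(j_1(\alpha),\alpha)+u_r(j_2(\alpha),\alpha)=d-a_\alpha$ at each node and $\sum_{\alpha\ni i}u_r(i,\alpha)=d-d_i$ on each component. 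The first family makes the glued data refined with $u_t(j_1)+u_{r-t}(j_2)=a_\alpha+(d-a_\alpha)=d$, and the second forces $\deg L_i=d$. Because the dual graph is a tree this linear system has a unique solution, given on an edge $\alpha$ separating $C$ into the piece $S$ containing $C_i$ and its complement by $u_r(i,\alpha)=d-D(S)$, where $D(S)=\sum_{j\in S}d_j-\sum_{\beta\text{ internal to }S}a_\beta$. Finally $V_i$ is the image of $W_i$ under $H^0(C_i,L_{i,i})\hookrightarrow H^0(C_i,L_i)$, an $(r+1)$-dimensional space with vanishing orders $u_t(i,\alpha)=w_t(i,\alpha)+u_r(i,\alpha)$.

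\textbf{Main obstacle.} The delicate point is that the reconstructed $u_r(i,\alpha)$ must be non-negative for $\{L_i,V_i\}$ to be a genuine limit linear series, i.e. one needs $D(S)\le d$ for every connected subtree $S$ arising from an edge deletion. I would establish this by induction on $M-\lvert S\rvert$. The base case $S=C$ is precisely condition (a), $D(C)=d$. For the inductive step, enlarge $S$ to a connected subtree $\widehat S=S\cup\{C_c\}$ by attaching a neighbouring component $C_c$ through an edge $\gamma$; a direct count gives $D(\widehat S)=D(S)+d_c-a_\gamma$, whence $D(S)=D(\widehat S)-(d_c-a_\gamma)\le D(\widehat S)\le d$, using the inductive hypothesis and the fact that $d_c-a_\gamma\ge 0$ is exactly the size constraint $a_\gamma\le d_{c}$ built into Definition \ref{defeff}. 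This is the one place where those constraints are used, and it is what guarantees that the reverse construction lands inside honest limit linear series. The two constructions are then mutually inverse: since the tree system has a unique solution and the forward construction produces one of its solutions, composing in either order returns the original bundles, sections and vanishing data.
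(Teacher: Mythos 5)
Your proof is correct and follows essentially the same route as the paper: the forward construction (twisting $L_i$ down by $u_r(i,\alpha)$ at each node and setting $a_\alpha=d-u_r(j_1(\alpha),\alpha)-u_r(j_2(\alpha),\alpha)$) is identical, and the explicit solution $u_r(i,\alpha)=d-D(S)$ of your tree linear system is exactly the paper's twist $d'_{j,l}$, so the reverse constructions coincide as well. The only substantive differences are presentational and in your favor: you prove non-negativity of the reconstructed multiplicities by a subtree induction where the paper merely asserts it follows from $a_\alpha\le d_{j_i(\alpha)}$, you isolate the useful observation that refined (b) plus (c) force $w_r(i,\alpha)=0$ and $w_0(i,\alpha)=a_\alpha$, and you check explicitly that the two constructions are mutually inverse, which the paper leaves implicit.
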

\begin{proof}
A limit linear series is defined in terms of line bundles on each of the components of a reducible curve and spaces of sections on these individual components.
In Proposition \ref{spdsl}, we saw how  a limit linear series gives rise to line bundles on the whole  curve and spaces of sections of these line bundles.
 Using that construction and with the notations there, we take then $L_{i,i}$ as defined on that proposition, namely $L_{i,i}=L_i(-\sum_{l=1,\dots, k_i} u_r(i, l)P_{i,l})$.
 This line bundle has degree $d_i=d-\sum_{l=0,\dots, k_i} u_r(i,l)$.
 
 If $P_{\alpha}$ is the node formed as the intersection of $C_{j_1(\alpha)}$ and $C_{j_2(\alpha)}$, define 
\[a_{\alpha}=d-u_r(j_1(\alpha), {\alpha})-u_r(j_2(\alpha), {\alpha}).\]

From the conditions on vanishing for a  refined limit linear series, $u_r(j_1(\alpha), {\alpha})+u_0(j_2(\alpha), {\alpha})= d$. Hence
\[ a_{\alpha}=d-u_r(j_1(\alpha), {\alpha})-u_r(j_2(\alpha), {\alpha})= u_0(j_2(\alpha), {\alpha})-u_r(j_2(\alpha), {\alpha})\ge r.\]
Condition (a) for an effective series follows from the definitions.

 As all the sections of $V_i$ vanish at $P_l$ with order at least $u_r(i,l)$, the  space 
 \begin{equation}\label{W_i}   W_i=V_i(-\sum_{l=0,\dots, k_i} u_r(i,l)P_{i,l}).\end{equation}
 is a space of sections of $L_{i,i}$ and still has dimension $r+1$.
Let $w_t(i,l)$ be the order of vanishing of the sections of $W_i$ at $P_l$, that is
\[ w_t(i,l)=u_t(i,l)-u_r(i,l)\]
The condition $u_0(i,l)>\dots>u_r(i,l)$ then implies $w_0(i,l)>\dots>w_{r-1}(i,l)>w_r(i,l)=0$ which implies the first condition in Definition \ref{defeff} part (b).

 As $u_t(j_1(\alpha),\alpha)+u_{r-t}(j_2(\alpha), \alpha)= d$,
  \begin{align*}
w_t(j_1(\alpha), \alpha)+w_{r-t}(j_2(\alpha), \alpha)&=u_t(j_1(\alpha), \alpha)-u_r(j_1(\alpha), \alpha)+u_{r-t}(j_2(\alpha), \alpha)-u_r(j_2(\alpha), \alpha)\\
&= d-u_r(j_1(\alpha), \alpha)-u_r(j_2(\alpha), \alpha)=a_{\alpha}.
\end{align*}
 which proves the second part of condition (b) for refined series. 
  
  Note now that if the irreducible components of $C$ containing the node $P_{\alpha}$ are $C_{j_1}, C_{j_2}$ with $P_{\alpha}=P_{j_1, l_1}=P_{j_2, l_2}$, 
   \begin{align*}
W_{j_1}(-a_{\alpha}P_{\alpha})&\supseteq  V_{j_1}(-\sum_{m=0,\dots, k_{j_1}} u_r(j_1,m)P_{j_1,m}- (d-u_r(j_1,l_1)-u_r(j_2,l_2))P_{\alpha})\\
&= V_{j_1}(-\sum_{m\not= l_1} u_r(j_1,m)P_{j_1,m}- (d-u_r(j_2,l_2))P_{\alpha})\\&\supseteq  V_{j_1}(-\sum_{m\not= l_1} u_r(j_1,m)P_{j_1,m}-u_0(j_1,l_1)P_{\alpha}),
  \end{align*}
  where we used that $u_0(j_1, l_1)+u_r(j_2, l_2)\ge d$.
  By definition of the orders of vanishing, this latter space has a section. In particular, this implies that $ a_{ \alpha} \le d_{j_i(\alpha)}$.
  
Conversely, an effective refined linear series $(L_{i,i}, W_i, a_{\alpha}), i=1,\dots, M, \alpha=1, \dots, M-1$, determines  a limit linear series
$(L_i, V_i), i=1,\dots, M$, as follows: given a component 
$C_j$, let $P_{j,1}, \dots, P_{j,k_j}$  be the set of nodes in $C_j$ and 
$X_{j, 1},  \dots, X_{j,k_j}$ the corresponding connected components of  $C-C_j$.

Define 
\[d'_{j,l}=\sum _{C_m\in X_{j,l}}d_m-\sum _{P_{\alpha}\in  X_{j,l}}a_{\alpha},\ \  L_j=L_{j,j}(\sum_l d'_{j,l}P_{j,l}).\]
The condition $a_{\alpha}\le d_{j_i(\alpha)}$ in (iii) guarantees that $d'_{j,l}\ge 0$. Then, 
\[ \deg L_j=d_j+\sum_l\sum _{C_m\in X_{k,l}}d_m-\sum _{P_{\alpha}\in  X_{k,l}}a_{\alpha}=d_j+\sum_{l\not= j}d_m-\sum _{\alpha}a_{\alpha}=d.\]

Define 
\[ V_j=W_j(\sum_l d'_{j,l}P_{j,l}).\] 
What we mean here is that we take the same spaces of sections $W_j$ with fixed points of multiplicities $d'_{j,l}$ at $P_{j,l}$.
Then using the second part of condition (b) in \ref{defeff}
\[  a_{\alpha}+d'_{j_1,\alpha}+d'_{j_2,\alpha}= u_t(j_1(\alpha), \alpha)+u_{r-t}(j_2(\alpha), \alpha)\]
\[  = a_{\alpha}+\sum _{C_m\in X_{j_1,\alpha}}d_m-\sum _{P_{\beta}\in X_{j_1, \alpha}}a_{\beta}
+\sum _{C_m\in X_{j_2,\alpha}}d_m-\sum _{P_{\beta}\in X_{j_2, \alpha}}a_{\beta}=\sum_{i=1,\dots, M}d_i-\sum_{\beta =1,\dots, M-1} a_{\beta}=d \] 
where the last equality comes from condition (a) in \ref{defeff}.
This concludes the proof of the fact that $(L_i, V_i)$ gives the data of a limit linear series.
\end{proof}   

Recall that Young tableaux of dimension $(r+1)(g-d+r)$ filled with integers among $1,\dots, g$
correspond to generic component of the image in the Jacobian of the set of limit linear series of degree $d$ and dimension $r$ on a   general chain of elliptic curves.
If an index $i$ appears in the tableau on column $t_0$, we write $t_0=t(i)$. Denote by $\beta_{i,t}=\sum _{\{ j\le i\  \} }\delta_{t(j),t}$.
In particular, $i$ appears in row  $\beta_{i,t(i)}$.

From the correspondence between refined limit linear series and refined effective series, these tableaux correspond also to effective linear series of degree $d$ and dimension $r$.
We describe the correspondence below.
 
\begin{lemma} \label{descreff}Let $C$ be a general chain of elliptic curves. 
Given a Young tableau of dimension $(r+1)(g-d+r)$ filled with integers among $1,\dots, g$,
consider a general point of the component of the Brill-Noether locus on the chain corresponding to the tableau.
This point gives rise to a limit linear series. 
The  line bundle $L^1$ defined in \ref{spdsl} from this limit linear series is described as follows:
\[L^1_{|C_i}=\begin{cases}
{\mathcal O}_{C_i}&  \textrm{if } t(i)= r\\
{\mathcal O}_{C_i}(x_i)\ \ \  \  x_i+(r+\beta_{i,t(i)}-t(i)-\beta_{i,r}-1) P_i\equiv  (r+\beta_{i,t(i)}-t(i)-\beta_{i,r} )  Q_i & \textrm{if } t(i)<r\\
{\mathcal O}_{C_i}(x_i)\ \ \  \  x_i \textrm{ generic}  &  \textrm{if } i \textrm{ not in tableau. }   \end{cases}\]

\end{lemma}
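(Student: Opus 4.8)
The plan is to unwind the construction of Proposition \ref{spdsl} in the case $i=1$ and to feed into it the explicit data for the limit linear series recorded in the proof of Theorem \ref{teorserlim}. Since $C_1$ is the leftmost component of the chain, for every $j\ge 2$ the connected component of $C-C_j$ whose closure contains $C_1$ is the one meeting $C_j$ at its left node $P_j$, while the remaining node $Q_j$ (present for $j<g$, and carrying $u_r(j)=0$ when $j=g$) points away from $C_1$. Thus the recipe of Proposition \ref{spdsl} uses the \emph{highest} order of vanishing at $P_j$ and the \emph{lowest} order at $Q_j$. Because the orders of vanishing of $V_j$ at $P_j$ are $d-u_r(j-1)>\dots>d-u_0(j-1)$, the former equals $d-u_r(j-1)$, so that
\[
L^1_{|C_j}=L_j\bigl(-(d-u_r(j-1))P_j-u_r(j)Q_j\bigr),\qquad j\ge 2.
\]
The restriction $L^1_{|C_1}$ is the special bundle carrying all $r+1$ sections, so the three displayed cases are to be read for $i\ge 2$.

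Next I would carry out the degree count and the identification of the bundle. Using (\ref{anulserlim}) one finds $u_r(j-1)-u_r(j)=1+\beta_{j-1,r}-\beta_{j,r}=1-\delta_{t(j),r}$, hence
\[
\deg L^1_{|C_j}=d-(d-u_r(j-1))-u_r(j)=u_r(j-1)-u_r(j)=1-\delta_{t(j),r},
\]
which is $0$ precisely when $t(j)=r$ and $1$ otherwise; this already isolates the three cases. On a component with special $L_j$ I would substitute the explicit bundle (\ref{fibrat}) together with (\ref{anulserlim}) for $u_r(j-1)$ and $u_r(j)$, and simplify the coefficients of $P_j$ and $Q_j$ in $L^1_{|C_j}=\mathcal{O}(A_jP_j+B_jQ_j)$. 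The coefficient of $Q_j$ comes out as $B_j=r-t(j)+\beta_{j,t(j)}-\beta_{j,r}$, and $A_j=\deg L^1_{|C_j}-B_j$. When $t(j)=r$ both coefficients vanish, giving $L^1_{|C_j}=\mathcal{O}_{C_j}$ (equivalently, a degree-$0$ bundle with a section, which Proposition \ref{spdsl} provides, is trivial on an elliptic curve by Lemma \ref{RR}); when $t(j)<r$ one rewrites $\mathcal{O}(A_jP_j+B_jQ_j)=\mathcal{O}(x_j)$ with $x_j+(B_j-1)P_j\equiv B_jQ_j$, which is exactly the asserted equivalence since $B_j=r+\beta_{j,t(j)}-t(j)-\beta_{j,r}$. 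Finally, if $j$ is absent from the tableau then $L_j$ ranges over the whole Jacobian of $C_j$, so its fixed twist $L^1_{|C_j}$ is a generic line bundle of degree $1$, i.e. $\mathcal{O}(x_j)$ with $x_j$ a generic point of $C_j$.

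The bulk of the argument is thus routine arithmetic; the points requiring care are organizational rather than hard. I must orient each component correctly, so that $u_0$ (not $u_r$) is used at the node facing $C_1$; I must treat the boundary component $C_g$, which has a single node, where the uniform formula stays valid because $u_r(g)=0$; and I should check that a general point of the component is a \emph{refined} limit linear series, which is what allows Proposition \ref{spdsl} to be applied. This last point holds because at the node gluing $Q_i$ to $P_{i+1}$ the vanishing orders from the two sides are $u_t(i)$ and $d-u_t(i)$, whose sum is $d$, so refinement is automatic at a generic point.
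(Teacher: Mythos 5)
Your proposal is correct and follows essentially the same route as the paper: both unwind Proposition \ref{spdsl} for the choice $i=1$, substitute the explicit data of equations (\ref{fibrat}) and (\ref{anulserlim}) from Theorem \ref{teorserlim}, and simplify the coefficients of $P_j$ and $Q_j$, your expression $d-u_r(j-1)$ for the top vanishing at $P_j$ agreeing with the paper's $v_0(j)$ via the identity $u_r(j-1)=u_r(j)+1-\delta_{t(j),r}$. Your added care about refinedness at a generic point, the end component $C_g$ (harmless since $u_r(g)=0$), and the fact that the displayed cases really concern $i\ge 2$ matches the paper, which likewise computes $L_{1,1}=L_1(-u_r(1)Q_1)$ separately.
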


\begin{proof} From the correspondence between limit linear series and Tableaux, if an index is not on the tableau, then $L_i$ is a general line bundle of degree $d$.
If $i$  appears in the tableau, then from equation (\ref{fibrat}), 
\[ L_i={\mathcal O}_{C_i}((d-u_{t(i)}(i))P_i+u_{t(i)}(i)Q_i)= \mathcal {O}(( t(i)+i-\beta_{i,t(i)})P_{i}+(d- t(i)-i+\beta_{i,t(i)})Q_{i}).\]
 Using equation (\ref{anulserlim}) the orders of vanishing of $V_i$ are written as   $u_s(i) =d-s-i+\beta_{i,s}$. 
The orders of vanishing $v_s(i)$ of the sections at $P_i$   are given by $v_s(i)=d-u_{r-s}(i)-1, s\not= r-t(i); \ v_{r-t(i)}(i)=d-u_{t(i)}(i)$

From the definition in \ref{spdsl},
 \begin{align*} 
L_{1,1}&=L_1(-u_r(1)Q_1)& ,\\ 
L_{i,1}&=L_i(-(v_0(i)P_i-u_r(i)Q_i))=&\begin{cases}
L_i(-(d-u_r(i)-1)P_i-u_r(i)Q_i),& t(i)\not= r, \\  
 L_i(-(d-u_r(i))P_i-u_r(i)Q_i), & t(i)= r.\end{cases}
 \end{align*}

From $L_i=\mathcal{O}((d-u_{t(i)}(i))P_i+u_{t(i)}(i)Q_i)$, if the index $i$ is on the last column ($t(i)=r$), then $L_{i,1}={\mathcal O}_{C_i}.$

If  $t(i)<r$, substituting the values of $L_i,  u_r(i)$,  we obtain
\[  L_{i,1}=L_i(-(d-u_r(i)-1)P_i-u_r(i)Q_i)=\]
\[\mathcal {O}(( t(i)+i-\beta_{i,t(i)})P_{i}+(d- t(i)-i+\beta_{i,t(i)})Q_{i} )(-(d-(d-r-i+\beta_{i,r}-1)P_i-(d-r-i+\beta_{i,r})Q_i))=\]
\[   = \mathcal{O}((t(i)-\beta_{i,t(i)}-r+\beta_{i,r}+1)P_i+(r-t(i)+\beta_{i,t(i)}-\beta_{i,r})Q_i) \]
As $L_{i,1}$ is a line bundle of degree 1 on an elliptic curve, we have $L_{i,1}={\mathcal O}_{C_i}(x_i)$, where $x_i$ satisfies the condition in the statement.

If the index $i$ does not appear in the tableau, $L_i$ is a general line bundle of degree $d$  therefore 
$L_{i,1}$ is a general line bundle of degree 1 on $C_i$ and we can write 
$L_{i,1}={\mathcal O}_{C_i}(x_i)$ where $x_i$ is a generic point of $C_i$.
\end{proof}

In the previous lemma, we computed the line bundles $L_{i,1}$. We can similarly compute the $L_{i,j}$ for other values of $j$. 
We can also find the spaces of sections
$W_j$ of $L_{j,j}$. From equation (\ref{W_i}), in our situation $W_i=V_i(-u_r(i)Q_i-(d-u_0(i)-1)P_i)$ if $t(i)\not= 0$  or $i$ is not on the tableau 
and $W_i=V_i(-u_r(i)Q_i-(d-u_0(i))P_i)$ if $t(i)= 0$.
It follows that the orders of vanishing of the sections of $W_j$ at $Q_j$ are $u_0(j)-u_r(j),\dots u_{r-1}(j)-u_r(j), u_r(j)-u_r(j)=0$. 
Using the expression in equation (\ref{anulserlim}), $u_s(i) =d-s-i+\beta_{i,s}$, the expression for $w_s$ is given by 
\begin{equation} \label{anuleff}  w_s(i)=r-s+ \beta_{i, s}-\beta_{i,r} \end{equation}

\begin{lemma} \label{descrtrop}Let $C$ be a general chain of loops. 
Given a Young tableau of dimension $(r+1)(g-d+r)$ filled with integers among $1,\dots, g$,
consider a general point of the component of the Brill-Noether locus on the chain corresponding to the tableau.
This point gives a divisor of the form   \[ rP_{1} +\sum _{k\ge1}\epsilon_{k}x_{k}.\] 
\[  \begin{cases}
\epsilon_i=0&  \textrm{if } t(i)=  r\\
\epsilon_i=1  \ \ \ \  x_i+(r+\beta_{i,t(i)}-t(i)-\beta _{r,i}-1) P_{i}\equiv  (r\beta_{i,t(i)}-t(i)-\beta_{i,r} ) Q_i &  \textrm{if } t(i)<r\\
\epsilon_i=1  \ \ \ \  x_i \ \textrm{ generic}  & \textrm{if } i \textrm{ not in tableau.}   \end{cases}\]
\end{lemma}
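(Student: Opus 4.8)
The plan is to read Lemma \ref{descrtrop} directly off the converse (``tableau $\Rightarrow$ component'') construction inside the proof of Theorem \ref{espaiBNTrop}, since the lemma is nothing more than the tropical translation of the effective-series description in Lemma \ref{descreff}. Recall that in that proof a component of the Brill--Noether locus attached to a given tableau was produced loop by loop: for each loop $\Gamma_i$ one fixes $\epsilon_i\in\{0,1\}$ and a point $x_i$ according to whether the index $i$ is absent from the tableau, appears in a column $t(i)<r$, or appears in the last column $t(i)=r$. The generic divisor so obtained was shown there to be equivalent to $rQ_0+\sum_{k\ge 1}\epsilon_k x_k$, the coefficient $r$ being $u_0(0)$, which the construction sets equal to $r$. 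Writing $P_1$ for $Q_0$ --- and more generally $P_i$ for the entry point $Q_{i-1}$ of $\Gamma_i$, to match the labeling used for the elliptic chain --- this already exhibits the announced shape $rP_1+\sum_k\epsilon_k x_k$.

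First I would record the trichotomy of cases exactly as in the construction. When $t(i)=r$ the recipe takes $\epsilon_i=0$ (this is case (b) of Corollary \ref{sumanul}), so no point $x_i$ is present; this is the first line of the statement. When $i$ does not appear in the tableau the recipe takes $\epsilon_i=1$ with $x_i$ a freely chosen generic point of $\Gamma_i$ (case (e)); this is the third line. When $t(i)<r$ the recipe takes $\epsilon_i=1$ and declares $x_i$ to be $t(i)$-special (case (d)), i.e.\ by Definition \ref{defesp} the unique point with $u_{t(i)}(i-1)Q_{i-1}+x_i\equiv(u_{t(i)}(i-1)+1)Q_i$; this is the second line, and it remains only to put the coefficients in closed form.

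To do that I would substitute the closed formula \eqref{ordantrop}, namely $u_s(i)=r-s+\beta_{i,s}-\beta_{i,r}$, into the special-point equivalence. Since a type-(d) choice at $\Gamma_i$ increments $u_{t(i)}$ by one, one has $u_{t(i)}(i-1)=u_{t(i)}(i)-1=r-t(i)+\beta_{i,t(i)}-\beta_{i,r}-1$, whence $u_{t(i)}(i-1)+1=r-t(i)+\beta_{i,t(i)}-\beta_{i,r}$. Feeding these two values into the displayed equivalence and again rewriting $Q_{i-1}$ as $P_i$ yields exactly
\[ x_i+(r+\beta_{i,t(i)}-t(i)-\beta_{i,r}-1)P_i\equiv(r+\beta_{i,t(i)}-t(i)-\beta_{i,r})Q_i, \]
which is the corrected second line and is precisely \eqref{divtrop}; so no computation beyond this bookkeeping is needed.

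The statement contains essentially no hidden difficulty: everything has already been established inside Theorem \ref{espaiBNTrop}, and the present lemma merely isolates the divisor attached to a general point of a component (which automatically has rank $r$, being a point of the Brill--Noether locus). The only points demanding care are purely notational --- keeping the two index conventions straight under the relabeling $P_i=Q_{i-1}$, and not confusing $\beta_{i,t}$ at level $i$ with its value at level $i-1$ (here $\beta_{i,t(i)}=\beta_{i-1,t(i)}+1$ and $\beta_{i,r}=\beta_{i-1,r}$ since $t(i)<r$) --- together with the observation that the type-(e) loops are exactly the $\rho$ loops whose indices are absent from the tableau, so that their free points $x_i$ are the coordinates on the product of $\rho$ loops. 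I would close by noting that $rP_1+\sum_k\epsilon_k x_k$ has degree $d$, since $u_0(0)=r$ and $\sum_i\epsilon_i=d-r$, in agreement with the degree count in the proof of Theorem \ref{espaiBNTrop}.
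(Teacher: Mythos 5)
Your proposal is correct and takes essentially the same route as the paper: the paper's own proof is a one-line pointer back to the converse construction in Theorem \ref{espaiBNTrop}, and in particular to equation (\ref{divtrop}), which is exactly what you unpack case by case. Your extra bookkeeping --- identifying $P_i$ with $Q_{i-1}$, deriving $u_{t(i)}(i-1)$ from equation (\ref{ordantrop}), and silently correcting the statement's typos ($r\beta_{i,t(i)}$ for $r+\beta_{i,t(i)}$ and $\beta_{r,i}$ for $\beta_{i,r}$) --- is consistent with the paper's intent.
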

\begin{proof} This is a recap of Section \ref{sectrop}. In particular, the description of the divisor follows from equation (\ref{divtrop}).
\end{proof}

\begin{remark}\label{Rem}  The statements of Lemmas \ref{descreff} and \ref{descrtrop} give a direct analogy between effective line bundles 
in the Brill-Noether locus for a chain of elliptic curves and divisor on tropical chains of loops.
We point out also that the orders of vanishing of these divisors or line bundles agree (see equations (\ref{ordantrop}) and (\ref{anuleff})).
As the proof of the Brill-Noether theorem relies on the positivity of these vanishing, the tropical or limit linear series proofs run in parallel.
\end{remark}

\begin{example} In this example, we exhibit the analogy and correspondence between the theory of effective limit linear series on a chain of elliptic curves 
and the theory of divisors on a tropical chain of loops. We look at the case when $g=6$, $d=6$, and $r=2$, and therefore $\rho=0$.

Let $C$ be a chain of elliptic curves of genus $6$ as in Definition \ref{cce}. Let us consider the example of Eisenbud-Harris limit linear series on this chain associated to the Young tableau \begin{center}
\medskip

\begin{center}
    \begin{tabular}{| c | c | c | }
    \hline
    1 & 2 &4\\
     \hline
3 & 5&6\\
 \hline
       \end{tabular}
\end{center}
\medskip
\end{center}
\medskip
The data of this limit is summarized in the table in Figure 1: the $i$th row corresponds to the irreducible component $C_i$, we give the corresponding degree 6 line bundle on $C_i$ followed by the orders of vanishing of the linearly independent sections at $P_i$ and $Q_i$, respectively. 

\bigskip
\begin{minipage}[t]{\dimexpr.43\textwidth-.45\columnsep}
\begin{center}

\begin{tikzpicture}[inner sep=0in,outer sep=0in]
\node (s) {\begin{varwidth}{5cm}{
\begin{center}
    \begin{tabular}{lccccc}
    &&&$s^1_i$ & $s^2_i$ & $s^3_i$ \\ \cline{4-6}
    \multirow{2}{*}{$\scriptstyle C_1$\phantom{mmm}} &\multirow{2}{*}{$\mathcal O(6Q_1)$}&&\color{orange}0&1&2 \\ 
    &&&\color{orange}6&4&3 \\  \cline{4-6}
    
        \multirow{2}{*}{$\scriptstyle C_2$}&\multirow{2}{*}{$\mathcal O(2P_2+4Q_2)$}&&0&\color{orange}2&3  \\ 
    &&&5&\color{orange}4&2 \\  \cline{4-6}
    
        \multirow{2}{*}{$\scriptstyle C_3$}&\multirow{2}{*}{$\mathcal O(P_3+5Q_3)$}&&\color{orange}1&2&4  \\ 
    &&&\color{orange}5&3&1 \\  \cline{4-6}
    
        \multirow{2}{*}{$\scriptstyle C_4$}&\multirow{2}{*}{$\mathcal O(5P_4+Q_4)$}&&1&3&\color{orange}5  \\ 
    &&&4&2&\color{orange}1 \\ \cline{4-6}
    
        \multirow{2}{*}{$\scriptstyle C_5$}&\multirow{2}{*}{$\mathcal O(4P_5+2Q_5)$}&&2&\color{orange}4&5  \\ 
    &&&3&\color{orange}2&0 \\  \cline{4-6}
    
        \multirow{2}{*}{$\scriptstyle C_6$}&\multirow{2}{*}{$\mathcal O(6P_6)$} &&3&4&\color{orange}6  \\  
    &&&2&1&\color{orange}0 \\ 

    \end{tabular}
\end{center}}\end{varwidth}};
\draw [black,  thick] plot [smooth, tension=1] coordinates { (-1.45,2.55) (-1.7,2.05) (-1.45,1.55)};
\draw [black,  thick] plot [smooth, tension=1] coordinates { (-1.45,1.65) (-1.7,1.15) (-1.45,0.65)};
\draw [black,  thick] plot [smooth, tension=1] coordinates { (-1.45,0.75) (-1.7,0.25) (-1.45,-0.25)};
\draw [black,  thick] plot [smooth, tension=1] coordinates { (-1.45,-0.15) (-1.7,-0.65) (-1.45,-1.15)};
\draw [black,  thick] plot [smooth, tension=1] coordinates { (-1.45,-1.05) (-1.7,-1.55) (-1.45,-2.05)};
\draw [black,  thick] plot [smooth, tension=1] coordinates { (-1.45,-1.95) (-1.7,-2.45) (-1.45,-2.95)};
\draw (1,-3.75) node {\textsc{Figure 1.} Eisenbud-Harris limit linear series};
\end{tikzpicture}

\end{center}
\end{minipage}
\begin{minipage}[t]{\dimexpr.6\textwidth-.0\columnsep}
\begin{center}
\begin{tikzpicture}[inner sep=0in,outer sep=0in]
\node (s) {\begin{varwidth}{5cm}{
\begin{center}
    \begin{tabular}{lccccc}
    &&&$s^1_i$ & $s^2_i$ & $s^3_i$ \\ \cline{4-6}
    \multirow{2}{*}{$\scriptstyle C_1$\phantom{mmm}} &\multirow{2}{*}{$\mathcal O(3Q_1)$}&&\color{cyan}0&1&2 \\ 
    &&&\color{cyan}3&1&0 \\  \cline{4-6}
    
        \multirow{2}{*}{$\scriptstyle C_2$}&\multirow{2}{*}{$\mathcal O(2P_2+2Q_2)$}&&0&\color{cyan}2&3  \\ 
    &&&3&\color{cyan}2&0 \\  \cline{4-6}
    
        \multirow{2}{*}{$\scriptstyle C_3$}&\multirow{2}{*}{$\mathcal O(4Q_3)$}&&\color{cyan}0&1&3  \\ 
    &&&\color{cyan}4&2&0 \\  \cline{4-6}
    
        \multirow{2}{*}{$\scriptstyle C_4$}&\multirow{2}{*}{$\mathcal O(4P_4)$}&&0&2&\color{cyan}4  \\ 
    &&&3&1&\color{cyan}0 \\ \cline{4-6}
    
        \multirow{2}{*}{$\scriptstyle C_5$}&\multirow{2}{*}{$\mathcal O(2P_5+2Q_5)$}&&0&\color{cyan}2&3  \\ 
    &&&3&\color{cyan}2&0 \\  \cline{4-6}
    
        \multirow{2}{*}{$\scriptstyle C_6$}&\multirow{2}{*}{$\mathcal O(3P_6)$} &&0&1&\color{cyan}3  \\  
    &&&2&1&\color{cyan}0 \\ 

    \end{tabular}
\end{center}}\end{varwidth}};
\draw [black,  thick] plot [smooth, tension=1] coordinates { (-1.45,2.55) (-1.7,2.05) (-1.45,1.55)};
\draw [black,  thick] plot [smooth, tension=1] coordinates { (-1.45,1.65) (-1.7,1.15) (-1.45,0.65)};
\draw [black,  thick] plot [smooth, tension=1] coordinates { (-1.45,0.75) (-1.7,0.25) (-1.45,-0.25)};
\draw [black,  thick] plot [smooth, tension=1] coordinates { (-1.45,-0.15) (-1.7,-0.65) (-1.45,-1.15)};
\draw [black,  thick] plot [smooth, tension=1] coordinates { (-1.45,-1.05) (-1.7,-1.55) (-1.45,-2.05)};
\draw [black,  thick] plot [smooth, tension=1] coordinates { (-1.45,-1.95) (-1.7,-2.45) (-1.45,-2.95)};
\draw (1,-3.75) node {\textsc{Figure 2.} Effective limit linear series};
\end{tikzpicture}
\end{center}
\end{minipage}

\bigskip

Let us now construct the corresponding \emph{effective} limit linear series on the chain of elliptic curves $C$ using the results in Section 3: 
\begin{itemize}
\item for $i=1, j=1$, we have 
$
u_0(1,1)=2, u_1(1,1)=1, u_2(1,1)=0\mbox{ and }
u_0(1,2)=6, u_1(1,2)=4, u_2(1,2)=3,
$
and then
$$
	L_{1,1}=L_1(-u_2(1,1)P_1-u_2(1,2)Q_1)=L_1(-0P_1-3Q_1)=\mathcal O_{C_1}(3Q_1);
$$
\item for  $i=1, j=2$, we have 
$
u_0(2,1)=3, u_1(2,1)=2, u_2(2,1)=0\mbox{ and }
u_0(2,2)=5, u_1(2,2)=4, u_2(2,2)=2,
$
and then
$$
L_{2,1}=L_2(-u_0(2,1)P_2-u_2(2,2)Q_2)=L_2(-3P_2-2Q_2)=\mathcal O_{C_2}(2Q_2-P_2);
$$
\item for $i=1,j=3$, we have 
$
u_0(3,1)=4, u_1(3,1)=2, u_2(3,1)=1\mbox{ and }
u_0(3,2)=5, u_1(3,2)=3, u_2(3,2)=1,
$
and then
$$
L_{3,1}=L_3(-u_0(3,1)P_3-u_2(3,2)Q_3)=L_3(-4P_3-Q_3)=\mathcal O_{C_3}(4Q_3-3P_3).
$$
\end{itemize}

Similar computations give 
\begin{itemize}
\item $L_{4,1}=L_4(-u_0(4,1)P_4-u_2(4,2)Q_4)=L_4(-5P_4-Q_4)=\mathcal O_{C_4},$
\item $L_{5,1}=L_5(-5P_5)=\mathcal O_{C_5}(2Q_5-P_5),$ and 
\item $L_{6,1}=L_6(-6P_6)=\mathcal O_{C_6}.$
\end{itemize}
\medskip

A complete description of the bundles $L_{j,i}$ is shown on the following table:
\medskip
\begin{center}
    \begin{tabular}{| c | c | c | c | c | c |}
    \hline
    $L_{1,1}$ & $L_{2,1}$ & $L_{3,1}$ & $L_{4,1}$ & $L_{5,1}$ & $L_{6,1}$\\ \hline
    
$    \color{cyan}\mathcal O (3Q_1)$ & $\mathcal O$ & $\mathcal O $ & $\mathcal O$ & $\mathcal O $ & $\mathcal O$ \\ \hline
    
$\mathcal O (2Q_2-P_2)$ & \color{cyan}$\mathcal O(2P_2+2Q_2)$ & $\mathcal O(2P_2-Q_2) $ & $\mathcal O(2P_2-Q_2)$ & $\mathcal O(2P_2-Q_2) $ & $\mathcal O(2P_2-Q_2) $ \\ \hline

$\mathcal O (4Q_3-3P_3)$ & $\mathcal O(4Q_3-3P_3)$ & \color{cyan}$\mathcal O(4Q_3) $ & $\mathcal O$ & $\mathcal O $ & $\mathcal O$ \\ \hline

$\mathcal O$ & $\mathcal O$ & $\mathcal O $ & \color{cyan}$\mathcal O(4P_4)$ & $\mathcal O(3P_4-2Q_4) $ & $\mathcal O(3P_4-2Q_4) $ \\ \hline

$\mathcal O(2Q_5-P_5) $ & $\mathcal O(2Q_5-P_5)$ & $\mathcal O(2Q_5-P_5) $ & $\mathcal O(2Q_5-P_5)$ & \color{cyan}$\mathcal O(2P_5+2Q_5) $ & $\mathcal O(2P_5-Q_5) $ \\ \hline

$\mathcal O $ & $\mathcal O$ & $\mathcal O $ & $\mathcal O$ & $\mathcal O $ & \color{cyan}$\mathcal O (3P_6)$ \\ \hline

    \end{tabular}
\end{center}
\medskip

The data for the \emph{effective} limit linear series is summarized in Figure 2, following the same conventions as in the Eisenbud-Harris limit.

Let now $\Gamma$ be a general chain of $6$ loops. The divisor corresponding to the tableau is of the form 
\[ 2Q_0+x_1+x_2+x_3+x_5\]
where $x_i$ is on the $ i^{th }$ loop. The points $x_i$  satisfy 
\[ {\color{cyan} 2Q_0+x_1\equiv 3Q_1},\ {\color{red} Q_1+x_2\equiv 2Q_2}, \ {\color{cyan} 3Q_2+x_3\equiv 4Q_3}, {\color{red}Q_4+x_5\equiv 2Q_5}.\]
\setcounter{figure}{2}
\begin{figure}[H] \label{Fig:ChainOfLoops}
\begin{tikzpicture}
\draw [ball color=black] (-1.1,0) circle (0.55mm);
\draw [ball color=black] (-.18,-.5) circle (0.15mm);
\draw (-0.5,0) circle (0.6);
\draw [fill=cyan](-0.18,-.5) circle (0.06);
\draw [ball color=black] (0.05,0.2) circle (0.55mm);
\draw  (0.7,1.2) circle (0.05);
\draw (0.7,0.5) circle (0.7);
\draw [fill=red] (0.7,1.2) circle (0.55mm);
\draw [ball color=black] (1.4,0.35) circle (0.55mm);
\draw (2,0.3) circle (0.6);
\draw  [fill=cyan](1.72,-.23) circle (0.06);
\draw [ball color=black] (2.6,0.257) circle (0.55mm);
\draw (3.4,0.3) circle (0.8);
\draw [ball color=black] (4.1,0.7) circle (0.55mm);
\draw (4.5,1) circle (0.5);
\draw [fill=red] (4.7,1.45) circle (0.55mm);
\draw [ball color=black] (5,0.9) circle (0.55mm);
\draw (5.57,0.75) circle (0.6);
\draw [ball color=black] (6.2,0.7) circle (0.55mm);

\node at (-1,-1) {$\scriptsize\mat{\\1}$};
\node[color=cyan, scale=.8] at (-1,-.75) {2};

\node at (0.1,-1) {$\scriptsize\mat{3\\ { }}$};
\node[color=red, scale=.8] at (0.1,-1.2) {1};

\node at (1.4,-1) {$\scriptsize\mat{\\2}$};
\node[color=cyan, scale=.8] at (1.4,-.8) {3};

\node at (2.6,-1) {$\scriptsize\mat{4\\2}$};
\node at (4.1,-1) {$\scriptsize\mat{3\\{ }}$};
\node[color=red, scale=.8] at (4.1,-1.2) {1};

\node at (5,-1) {$\scriptsize\mat{3\\2}$};
\node at (6.15,-1) {$\scriptsize\mat{2\\1}$};

\end{tikzpicture}
\caption{The chain $\Gamma$ with vanishing orders at the nodes.}
\end{figure}
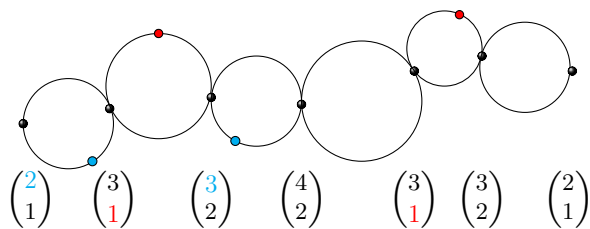

\end{example}

\noindent {\bf Acknowledgements.}
The authors would like to thank the referee for a very careful reading of the manuscript. 
During the summer of 2014. they had fruitful discussions on related topics  with Melody Chan and Nathan Pflueger.   
The first author would also like to thank Sam Payne and Jan Draisma for helping him understand the way they thought about the combinatorics of chip-firing during the BIRS workshop ``Specialization of Linear Series for Algebraic and Tropical Curves'' in April 2014 and the Max-Planck Institut f\"ur Mathematik in Bonn for the wonderful working conditions and stimulating environment over the duration of his visit in the summer of 2014. 
\def\cprime{$'$}

\end{document}